\newtheorem{thm}{Theorem}[section]
\newtheorem{cor}[thm]{Corollary}
\newtheorem{prop}[thm]{Proposition}
\newtheorem{rem}{Remark}[section]
\title{Knots in $\mathbb{R}P^3$}
\author{Louis H. Kauffman, Rama Mishra and Visakh Narayanan}
\begin{document}
\maketitle
\noindent {\bf Abstract:}
This paper studies knots in three dimensional projective space. Techniques in virtual knot theory are applied to obtain a Jones polynomial for projective links and it is shown that this is equivalent to the Jones polynomial defined by Drobotukhina. A Khovanov homology theory for projective knots is constructed by using virtual Khovanov homology and the virtual Rasmussen invariant of Dye, Kaestner, and Kauffman. This homology theory is compared with the Khovanov theory developed by Manolescu and Willis for projective knots. It is shown that these theories are essentially equivalent, giving new viewpoints for both methods. The paper ends with problems about these approaches and an example of multiple projectivizations of the Figure-8 knot whose equivalence is unknown at this time. \\

\noindent \textbf{Mathematics subject classification: 57K10, 57K12, 57K14} 

\section{Introduction}

Our goal is to understand the knot theory of the real projective three space, $\mathbb{R}P^3$. Since $\mathbb{R}P^3$ is a manifold similar to $S^3$, it is natural to expect that the knot theory of these manifolds are closely related. Virtual knot theory is a perturbation of classical knot theory. We perturb it further, by adding a special move, called the \say{simple flype} and obtain a virtual model for knots in projective space. Thus structures available in the virtual world can be exported to the projective space, to obtain invariants for projective knots. One of the most interesting problems in projective knot theory, is to detect whether a knot is affine. The property of a projective knot being non-affine is represented by the property of its virtual model being non-classical. Thus, some theorems like Theorem \ref{drobcor}, known about non-classical virtual knots can be used to study the non-affineness of projective knots.\\

We show that the virtual bracket polynomial can be used to construct a bracket polynomial for projective knots. By normalizing this we obtain a Jones type polynomial for projective knots. A version of Jones polynomial for projective knots was defined by Drobotukhina \cite{julia} by using the diagrammatic theory obtained by projecting a knot in $\mathbb{R}P^3$ to some projective plane and expanding crossings by the Kauffman skein relation. It turns out that both the polynomials are equivalent. \\

We  construct a Khovanov-type homology theory by using the Khovanov homology for virtual knots defined by Manturov \cite{M}. The Lee deformation and a Rasmussen invariant for this obtained by Dye, Kaestner and Kauffman \cite{DKK} descends to a Rasmussen invariant for projective knots. Hence we have theorems like Theorem \ref{ourthm}, regarding the equality of the 4-ball genus and the Seifert genus of positive knots in $\mathbb{R}P^3$. Manolescu and Willis \cite{Manolescu} defined a Khovanov homology and a Rasmussen invariant for projective knots with very different methods. They  proved a theorem establishing the equivalence of 4-ball genus and the Seifert genus for positive knots. It is natural to wonder how these two cohomology theories are related. We  provide a structural comparison between the two and we show that they are both essentially equivalent. Thus most of the known results about projective knots can be seen as coming from this virtual model for them. \\

\noindent \textbf{Organization of the paper:} In Section \ref{flypesection} we introduce the virtual model for projective knots by adding an extra move, called the \say{flype move} to the equivalence of usual virtual knots. Then we prove that the bracket polynomial for usual virtual knots is invariant under the flype move (Theorem 2.4). Then a theorem about affine knots is proved  as Theorem 2.5 which results in Corollary \ref{drobcor} which gives an obstruction for a projective knot to be affine. In Section \ref{examples} we provide some examples showing the application of Corollary \ref{drobcor}. Section \ref{virtualknots} and Section \ref{stableclass} provide some details about virtual knot theory which will be used extensively in the later parts of the paper. Section \ref{Khovanov} defines the Khovanov-type invariant for projective knots. Theorem \ref{ourthm} is proven for projective knots. Then in \ref{comparison} we compare our theory with that of \cite{Manolescu} and show that they are equivalent. In Section 7 we discuss some properties of projective knots, which our virtual model cannot capture. We add an Appendix as Section 8, which includes the $Mathematica$ code for bracket calculation of a 13 crossing knot appearing in Section 6.

\section{Knots in projective space, virtual knots and the bracket polynomial} \label{flypesection}

 Knots in projective space $RP^3$ can be represented by diagrams in $RP^2$ where the projective plane $RP^2$  is represented by a disk with antipodal identifications on the boundary. Thus a projective knot can be represented by a diagram showing a tangle in a disk with marked tangle-ends that correspond to the antipodal identifications on the boundary of the disk. The diagram on the left of Figure \ref{class0knot} is an example. Moves on these diagrams consist in the Reidemeister moves plus two slide moves across the boundary identification as shown in Figure \ref{slide}. Since the fundamental group of $\mathbb{R}P^3$ is $\frac{\mathbb{Z}}{2\mathbb{Z}}$, there are two homotopy classes of knots in $\mathbb{R}P^3$. We call them \say{class-0} and \say{class-1} knots, where class-0 refers to the trivial homotopy class. The class of a knot is the same as the $mod2$ class of half the number of points on the boundary of the disk in any diagram of the knot. If a knot has a diagram contained entirely in the interior of a disk we call them \say{affine}. It is clear that affine knots are contained in a ball and are contractible. Hence all affine knots are class-0.\\

\par But not all class-0 knots are affine. The knot shown in Figure \ref{class0knot} cannot have a diagram which is entirely contained in the interior of the disk. A quick way to see this is that, the lift of any projective plane under the canonical double covering is a 2-sphere in $S^3$. The boundary circle of the disk is the projection of a projective plane. If a knot $K$ is affine, then it is disjoint from this projective plane in $\mathbb{R}P^3$, then the lift $K$ under the covering map is a link with two components which are separated by the sphere covering this projective plane. Hence the covering link should have zero linking number. It is easy to see that the covering link of the knot in Figure \ref{class0knot} has linking number 2. Thus there are class-0 knots which are \say{non-affine}. Note that all class-1 knots are non-affine.Thus there are three families of knots in $\mathbb{R}P^3$ namely affine, class-0 non-affine and class-1. An interested reader may consult \cite{MN} for more details.

\begin{figure}
\centering
\includegraphics{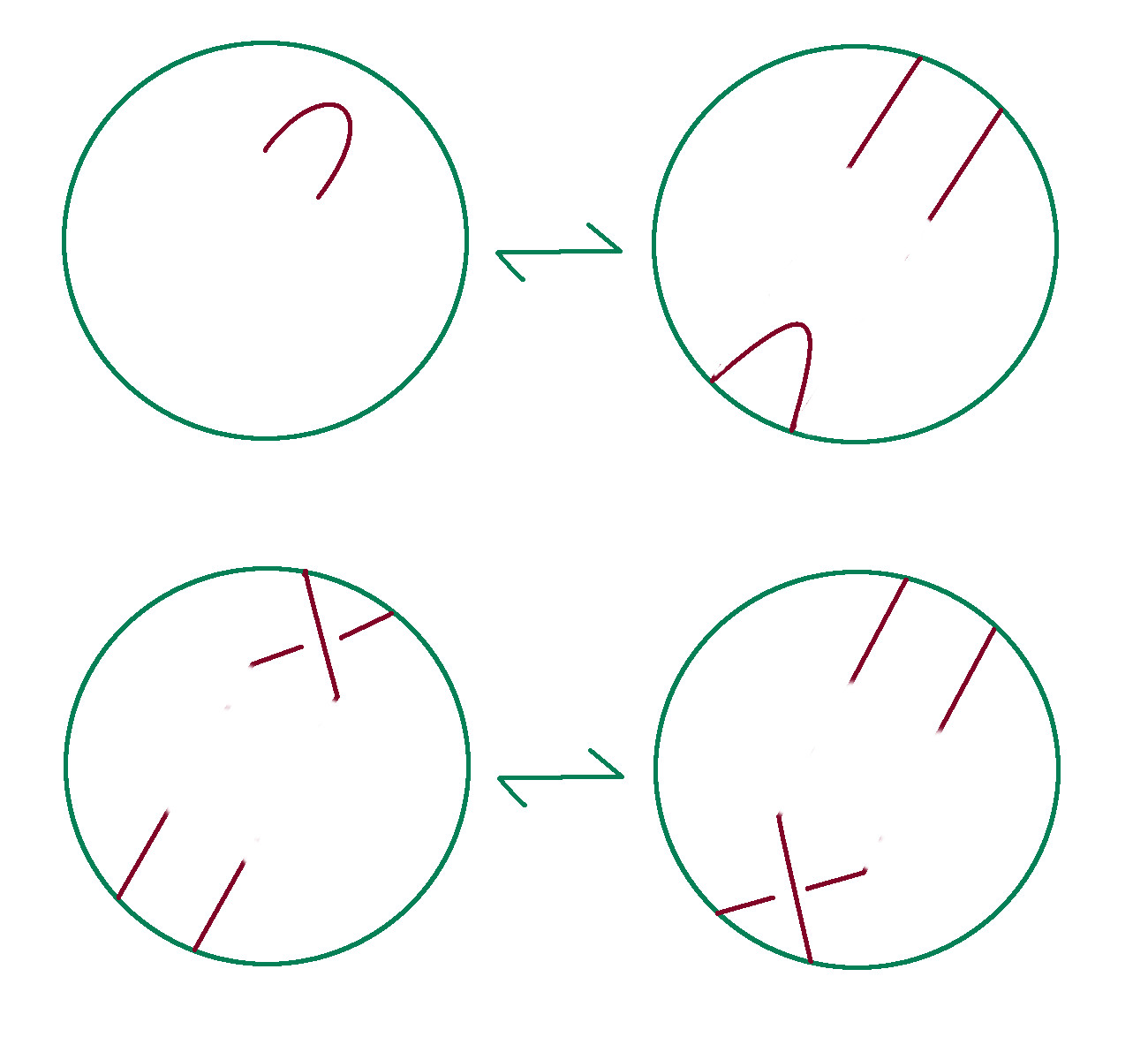}
\caption{Slide Moves}
\label{slide}
\end{figure}

Refer to Figure \ref{slide} and Figure \ref{crosslide}, to note that when a crossing slides across the boundary its oriented sign remains the same. We are viewing the projection of the diagram to the projective plane in terms of the embedding of a thickened projective plane in $RP^{2} \subset RP^{3}.$ The neighbourhood of the projective plane in the projective three space is a twisted $I-$ bundle over the projective plane. One can visualize the slide move by imagining sliding a diagram drawn on a M\"obius strip through the twist in the strip. Because we respect the twisted $I-$bundle, the viewer of the diagram twists as he/she moves through the twist. This means that the crossing sign is retained as shown in Figure \ref{crosslide}.

\begin{figure}
\centering
\includegraphics[scale=0.8]{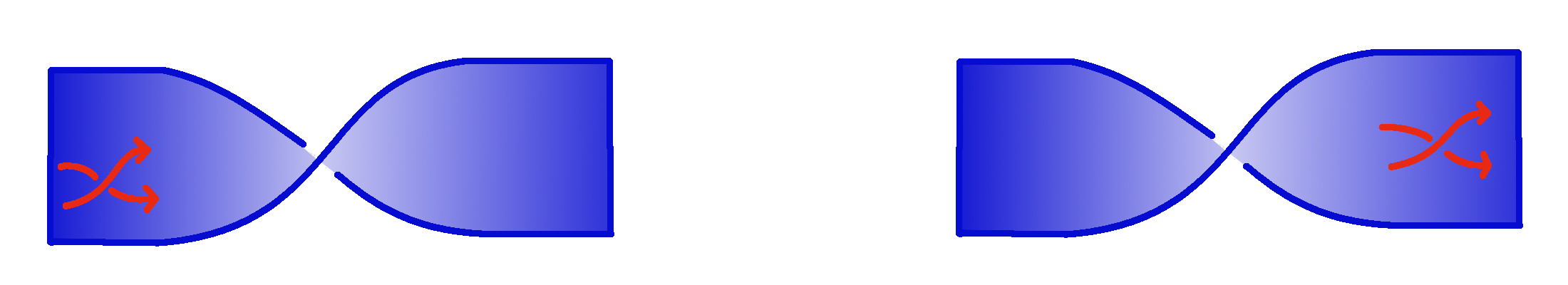}
\caption{Sliding a crossing over a half twist}
\label{crosslide}
\end{figure}

Since the  Reidemeister and slide moves are compatible with crossing types as we have discussed above, one can define the bracket polynomial expansion and its writhe normalization just as we have done in the case of diagrams in the plane or on the two dimensional sphere. This means that the loop counting for the bracket polynomial can be done by representing the knot in projective space by a tangle drawn in the projective disk and identifying the endpoints of this tangle according to the antipodal identifications on the boundary of the disk. This identification can be simplified by noting that for n-strands in the tangle top and bottom) there is a canonical permutation corresponding to the antipodal map. The tangle may not have the same number of endpoints at the top and bottom in which case we can still use the antipodal identification.\\

\begin{figure}
\centering
\includegraphics[scale=1]{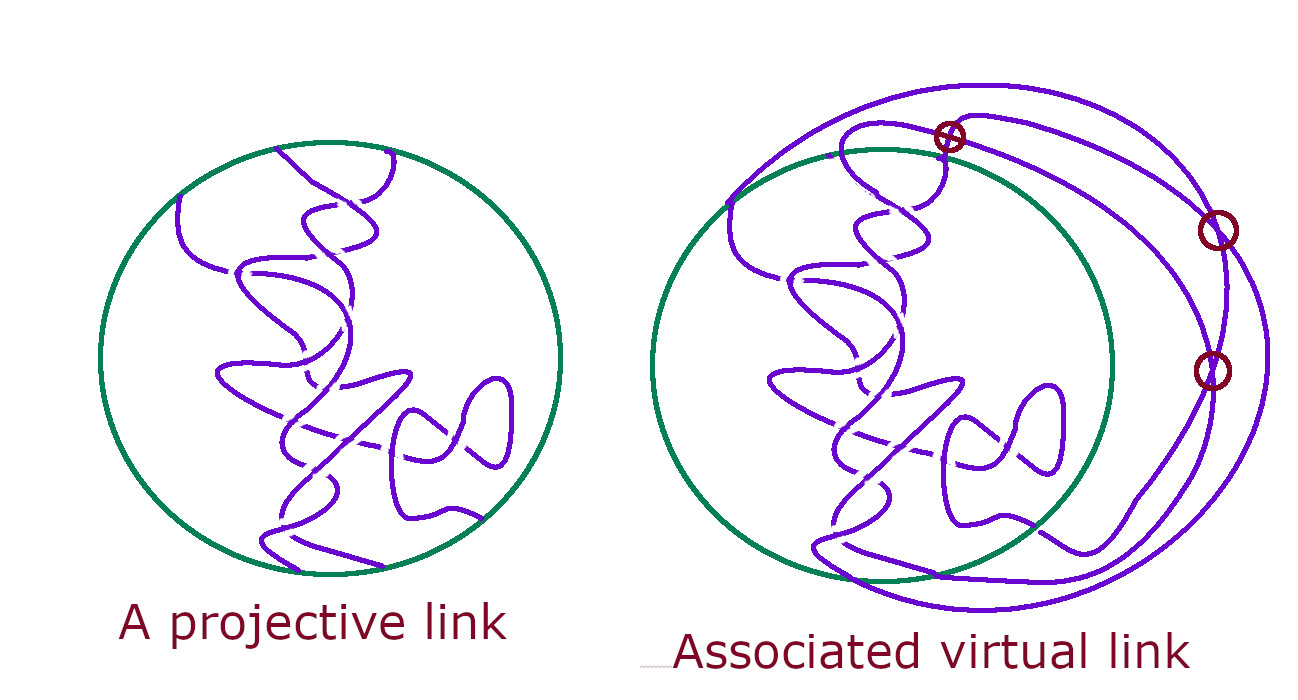}
\caption{Association of a virtual link to a projective link.}
\label{pi}
\end{figure}

We define a mapping $\pi$ from diagrams in the projective plane to virtual knot and link diagrams. See Figure \ref{pi}. Diagrammatically the association of a virtual link diagram to a tangle representation of a projective knot is this: Regard the disk for the projective tangle as embedded in the plane or two-sphere. Draw the connecting arcs between identified points on the boundary of the disk as actual arcs embedded in the complement of the disk in the plane. The arcs may cross transversally and these crossings are taken as virtual crossings. If $T$ is the tangle associated with the projective link, let $\pi(T)$ denote the corresponding virtual link diagram we have just described. We will now prove that if $K$ and $L$ are equivalent projective links, then $\pi(K)$ and $\pi(L)$ are related by a simple set of moves on virtual links. Thus certain invariants of virtual links can be used to construct projective link invariants.\\

Recall that the bracket polynomial for virtual knots, is defined by the usual rules, shown in Figure \ref{bracketdef}. 
\begin{figure}
\includegraphics{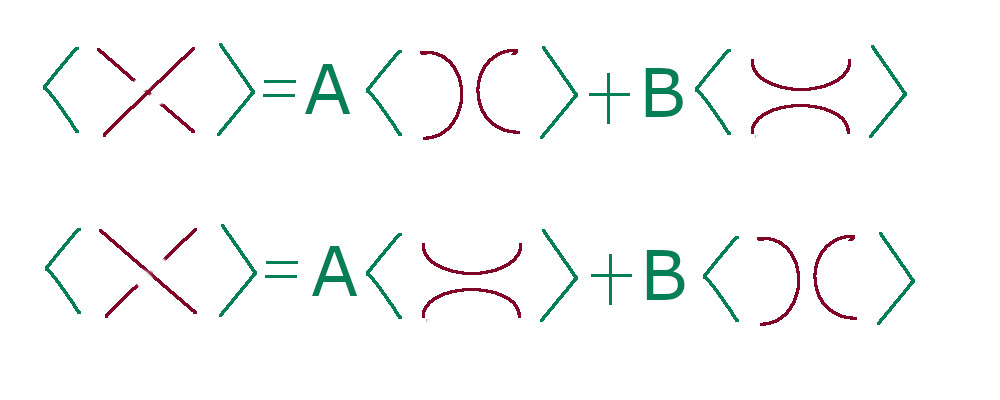}\\
\includegraphics{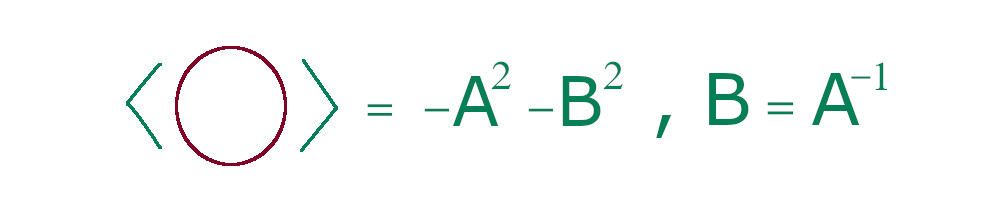}
\caption{Defining rules for the bracket polynomial.}
\label{bracketdef}
\end{figure}

 In the case of the bracket polynomial we shall show that $\langle K \rangle  = \langle \pi(K) \rangle $ for any projective link $K$ where the first bracket is the bracket as defined directly on the projective diagram and the second bracket is the usual simplest bracket polynomial for virtual links. By using this mapping of projective links to virtual links, we can apply invariants of virtual links to measure properties of links in projective space. We will give a number of examples of this correspondence in the course of the paper.\\
 
\begin{figure}
\centering
\includegraphics{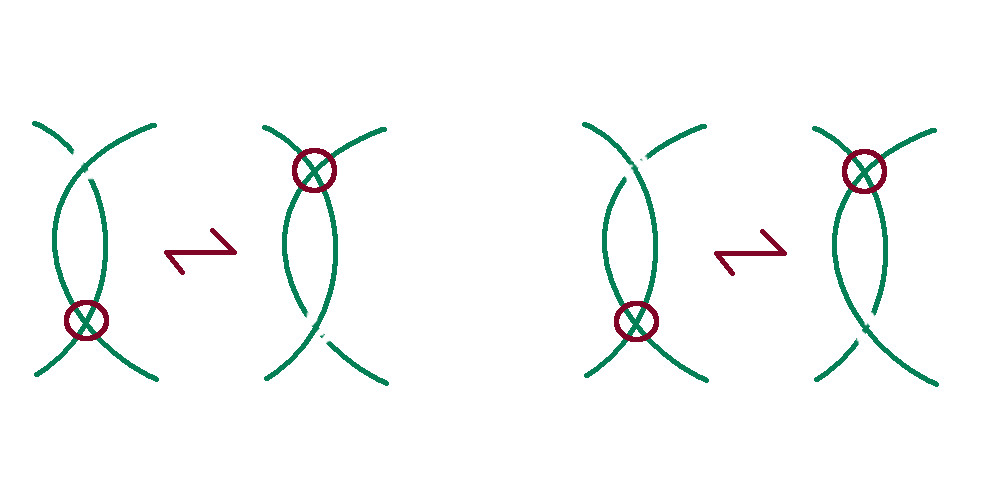}
\caption{Flype move}
\label{flype}
\end{figure}

The scheme described above will associate a diagram of a virtual link with the diagram of a projective link. Now we study this association in more detail. It is easy to see that, if we perform a slide move by transferring a crossing across the projective plane at infinity, the associated virtual link before and after are related by the move shown in Figure \ref{flype}. We call this as the \say{Flype move}. We prove the following.

\begin{prop}
Any two virtual link diagrams associated with a link in $\mathbb{R}P^3$, are related by a sequence of ordinary virtual Reidemeister moves \cite{vkt} and the Flype move. \\

\end{prop}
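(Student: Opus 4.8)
The plan is to reduce the statement to a check on the generators of projective-link equivalence. Two diagrams represent the same link in $\mathbb{R}P^3$ precisely when they are connected by a finite sequence of Reidemeister moves performed in the interior of the disk together with the two slide moves of Figure \ref{slide}. Since $\pi$ is applied diagram-by-diagram, it suffices to show that for each such generating move the two virtual diagrams $\pi(\text{before})$ and $\pi(\text{after})$ are related by virtual Reidemeister moves and Flype moves. One preliminary point must be settled first, because the definition of $\pi$ involves a choice: the connecting arcs between identified boundary points may be routed in different ways in the complement of the disk.

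First I would establish that $\pi$ is well defined up to virtual Reidemeister moves. Given a tangle $T$, any two routings of the connecting arcs join the same antipodal pairs of boundary points and differ only in how the arcs are embedded in the complement of the disk, where all crossings are virtual. By the virtual detour move — a standard consequence of the virtual Reidemeister moves of \cite{vkt}, which allows an arc carrying only virtual crossings to be slid to any other embedded arc with the same endpoints — these routings yield virtual diagrams related by virtual Reidemeister moves. Consequently $\pi(T)$ is well defined up to virtual Reidemeister moves, and in the remaining steps I am free to re-route the connecting arcs however is convenient.

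Next I would dispatch the Reidemeister moves. A Reidemeister move carried out in the interior of the projective disk is supported in a small sub-disk disjoint from the boundary and hence from all the connecting arcs. Applying $\pi$ before and after produces two virtual diagrams that agree outside this sub-disk and differ inside by exactly the same Reidemeister move, which is simultaneously an ordinary virtual Reidemeister move. So these moves transfer across $\pi$ with nothing to prove.

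The hard part will be the slide moves, and this is where the Flype move enters. As observed just before the statement (compare Figures \ref{crosslide} and \ref{flype}), pushing a single crossing across the boundary identification converts, under $\pi$, into precisely the Flype move; the key geometric input is that the half-twist of the twisted $I$-bundle neighbourhood of the projective plane preserves the oriented crossing sign (Figure \ref{crosslide}), which is exactly the data recorded by the Flype. For a general slide move I would reduce to this local model: using the freedom from the first step, re-route the connecting arcs so that the strand being transferred is adjacent to a single boundary arc and meets the boundary in the minimal way, then realize the slide as a composition of elementary slides of one strand (or one crossing) at a time. Each elementary slide is then a Flype when a crossing crosses the boundary and a detour (hence a virtual Reidemeister) move when only an arc is transferred. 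The delicate bookkeeping — and the main obstacle — is tracking how the antipodal re-gluing of the moved endpoints rearranges the connecting arcs and verifying that, together with the preserved crossing sign, the result is the Flype of Figure \ref{flype} rather than some variant; once the local picture is aligned with the twisted $I$-bundle geometry, composing the elementary steps finishes the proof.
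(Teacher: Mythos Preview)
Your proposal is correct and follows essentially the same route as the paper: reduce to generators (Reidemeister moves plus the two slide moves of Figure~\ref{slide}, citing Drobotukhina), observe that interior Reidemeister moves transfer verbatim, and then handle the slide moves. The paper is slightly more direct --- it simply identifies slide move~I with a virtual Reidemeister~I move and slide move~II with the Flype, with no decomposition into ``elementary slides'' needed --- whereas you add a useful preliminary step the paper glosses over, namely verifying via the detour move that $\pi$ is well defined up to virtual Reidemeister moves independently of the routing of the connecting arcs.
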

\begin{proof} Let $L$ and $L’$ be projective links that are equivalent in $\mathbb{R}P^3$. Let $\pi(L)$ and $\pi(L’)$ be the associated virtual links. Then $\pi$(L) and $\pi(L’)$ are related by a sequence of standard virtual moves and the Flype move. The virtual link diagrams associated to the diagrams of a projective link before and after performing any Reidemeister move on a projective link diagram is clearly related by the corresponding generalized Reidemeister move on virtual link diagrams. Hence it is enough to examine how the associated virtual links are related before and after performing each of the slide moves shown in Figure \ref{slide}. It is clear the virtual link diagrams associated before and after performing a slide move-I are related by a virtual Reidemeister move-I. Similarly, the virtual link diagrams associated before and after a Slide move-II are related by a Flype move. Since any two diagrams of a projective link are related by a finite sequence of Reidemeister moves and slide moves \cite{julia}, the proposition follows. 
\end{proof}

Let PVK denote the set of virtual links up to usual Reidemeister moves and the Flype move. Then by Proposition 1, we have that, the association defined above will induce a mapping, 

\begin{equation*}
\pi: \{Links\ in\ projective\ space/ambient\ isotopy\}\to PVK. 
\end{equation*}

\vspace{0.7cm}

\begin{cor}
If K is an affine link in $\mathbb{R}P^3$, then $\pi(K)$ is a classical virtual knot.
\end{cor}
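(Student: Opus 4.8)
The plan is to argue directly from the definition of affineness together with the construction of $\pi$. By definition, an affine link $K$ admits a diagram $D$ lying entirely in the interior of the projective disk. The first step is to isolate the only feature of $D$ that matters here: since $D$ sits in the interior, it meets the boundary circle of the disk in no points whatsoever, so the underlying tangle has no tangle-ends subject to the antipodal identification. (It is worth flagging that being affine forces the boundary-point count to be exactly zero, not merely even as for a general class-$0$ knot.)

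Next I would feed this particular diagram $D$ into the construction of $\pi$. Recall that $\pi$ produces a virtual diagram by drawing, in the complement of the disk, arcs joining the antipodally identified boundary points and by declaring the transverse crossings of these arcs to be virtual crossings. Because $D$ has no boundary points, there are no antipodal pairs to connect, hence no connecting arcs are drawn and no virtual crossings are created. Consequently $\pi(D)$ is nothing but the diagram $D$ itself, now regarded as a diagram in the plane; in particular it is free of virtual crossings, i.e.\ it is an ordinary classical (non-virtual) link diagram.

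Finally I would conclude. A virtual link is called classical precisely when it is represented by some diagram having no virtual crossings, and $\pi(D)$ is such a diagram, so $\pi(K)$ is classical. Here I invoke Proposition 1, the well-definedness of $\pi$ up to virtual Reidemeister moves and the Flype move, in order to justify evaluating $\pi(K)$ on the convenient interior representative $D$ rather than on an arbitrary diagram of $K$. The argument is essentially definitional, so I do not expect a serious obstacle; the only point requiring care is the bookkeeping verifying that affineness really drives the boundary-point count to zero, so that no connecting arcs — and therefore neither virtual crossings nor any Flype moves — ever enter the picture.
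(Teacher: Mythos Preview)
Your argument is correct and is precisely the reasoning the paper has in mind: the corollary is stated without proof, as an immediate consequence of the definition of affine together with the construction of $\pi$ and Proposition~1. Indeed, the very same logic (``Since $K$ is an affine knot it has a diagram such that the associated tangle $T$ has no boundary points. Hence there are no virtual crossings in $\pi(T)$'') is spelled out a few lines later in the paper's proof of Theorem~2.5, so your proposal matches the paper's approach exactly.
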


\begin{cor}
Any invariant of virtual link diagrams that is invariant under the flype move is an invariant of links in $\mathbb{R}P^3$.
\end{cor}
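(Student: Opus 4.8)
The plan is to observe that an invariant with the stated properties factors through the set PVK, and then simply to precompose it with the map $\pi$. Concretely, let $f$ be any invariant of virtual link diagrams that is in addition unchanged by the Flype move, and let $R$ denote the value set of $f$. Being an invariant of virtual link diagrams, $f$ is already unchanged by the ordinary (generalized) virtual Reidemeister moves; together with the assumed Flype invariance this says that $f$ is constant on each equivalence class of PVK, so it descends to a well-defined function $\bar f : \mathrm{PVK} \to R$. For a link $K$ in $\mathbb{R}P^3$ I would then set $F(K) := \bar f(\pi(K))$ and claim that $F$ is the desired invariant of projective links.

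The key step is to check that $F$ is well-defined on ambient isotopy classes of projective links, and this is precisely what Proposition 1 and the induced map discussion supply. Given a projective link $K$, I would choose a projective diagram $D$ for it and a realization of the associated virtual diagram $\pi(D)$. If $D'$ is any second diagram of the same projective link, then $D$ and $D'$ are related by a finite sequence of Reidemeister and slide moves, so by Proposition 1 the associated virtual diagrams $\pi(D)$ and $\pi(D')$ are related by a finite sequence of ordinary virtual Reidemeister moves and Flype moves. Since $f$ is invariant under each of these, $f(\pi(D)) = f(\pi(D'))$, so the value $F(K)$ is independent of the chosen diagram, and $F$ is a genuine invariant of links in $\mathbb{R}P^3$.

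The one point I would flag rather than belabor is that, even for a single fixed projective diagram $D$, there is a choice in forming $\pi(D)$, namely how the connecting arcs in the complement of the disk are drawn; distinct choices differ by virtual detour moves. This is exactly why the hypothesis must require invariance under all the virtual Reidemeister moves and not merely under the Flype move: detour invariance is what makes $\pi$ well-defined into PVK in the first place. Once this is noted there is no remaining obstacle, since all of the geometric content has already been absorbed into Proposition 1, and the corollary reduces to the formal fact that $f$ is constant on PVK-classes while $\pi$ lands in PVK.
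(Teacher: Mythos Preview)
Your argument is correct and is exactly the intended one: the paper states this corollary without proof, treating it as immediate from Proposition~1, and you have simply written out the evident factorization $F = \bar f \circ \pi$ together with the well-definedness check that Proposition~1 supplies. Your additional remark about detour moves absorbing the ambiguity in drawing the connecting arcs is a nice clarification that the paper leaves implicit.
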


\begin{rem}
We have elsewhere\cite{vkt} called the equivalence relation of virtual links generated by standard moves and the Flype move, Z-Equivalence. Thus projective links that are equivalent in $\mathbb{R}P^3$ map to Z-equivalent virtual links. In this paper we will only say virtual links equivalent under the Flype move and not use the term Z-equivalence. 
\end{rem}

\begin{thm}\label{flypeinv}
The normalized Kauffman bracket polynomial is invariant under the flype move.  
\end{thm}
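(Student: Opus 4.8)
The plan is to verify the invariance directly from the state-sum definition of the bracket, by expanding the single classical crossing that the flype relocates. I would write the flype as a local identity between two virtual tangle diagrams $F$ and $F'$ that agree outside a disk, where inside the disk $F$ consists of a classical crossing $X$ placed beside a subtangle $R$, while $F'$ has the same subtangle rotated by $180^\circ$ with the crossing moved to the opposite side through the virtual arcs (the two configurations of Figure \ref{flype}). Because the bracket of a virtual diagram is computed by smoothing only the \emph{classical} crossings and counting the resulting loops, with virtual crossings never being smoothed, it suffices to show that the two local tangles contribute identically to every state of the ambient diagram.

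First I would apply the skein relation of Figure \ref{bracketdef}, $\langle X \rangle = A\,\langle X_{0} \rangle + A^{-1}\,\langle X_{\infty} \rangle$, to the crossing $X$ on each side of the move. This replaces each side by a sum of two diagrams in which the flype region now contains no classical crossing, only the subtangle $R$ (or its rotation $R'$) together with the connecting arcs that carry the virtual crossings. The goal then reduces to a purely combinatorial claim: the $0$-smoothed term of $F$ equals the $0$-smoothed term of $F'$, and likewise for the $\infty$-smoothed terms, with matching powers of $A$.

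The second and central step is to establish these two equalities. Here I would use that a $180^\circ$ rotation of a tangle is a planar isotopy on the sphere and hence leaves the bracket unchanged, together with the detour move for virtual crossings, which permits the connecting arcs to be rerouted without altering the loop count of any state. Matching the smoothings amounts to exhibiting a bijection between the states of $F$ and those of $F'$ that preserves both the number of loops and the total $A$-exponent; the rotation supplies this bijection once the virtual arcs have been normalized by detour moves. I expect this loop-count bookkeeping to be the main obstacle, since it is precisely where the half-twist coming from the antipodal identification enters, and where a careless matching could silently add or drop a loop and thereby a factor of $d = -A^{2}-A^{-2}$.

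Finally, for the normalized bracket I would observe that the flype neither creates nor destroys crossings and preserves the sign of $X$, as established above from the twisted $I$-bundle picture in Figure \ref{crosslide} in which the crossing sign is retained across the half-twist. Consequently the writhe is unchanged by the move, so the normalization factor $(-A^{3})^{-w}$ is identical on both sides, and invariance of the unnormalized bracket upgrades at once to invariance of the normalized bracket.
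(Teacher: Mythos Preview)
Your proposal is correct and follows essentially the same route as the paper: the paper's proof consists entirely of Figure~\ref{bracketflype}, which is precisely the bracket expansion of the single classical crossing on each side of the flype, with the two $A$-smoothed terms and the two $A^{-1}$-smoothed terms seen to coincide after detour moves. Your added remark that the writhe is unchanged (so the normalization factor agrees) is exactly what is needed to pass from the unnormalized to the normalized bracket. One small comment: your framing in terms of a subtangle $R$ that gets rotated is slightly more elaborate than the actual move in Figure~\ref{flype}, which involves only a lone classical crossing sliding past a pair of virtual crossings; there is no additional tangle to rotate, so the matching of smoothed terms is even more direct than you suggest.
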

\begin{figure}
\centering
\includegraphics{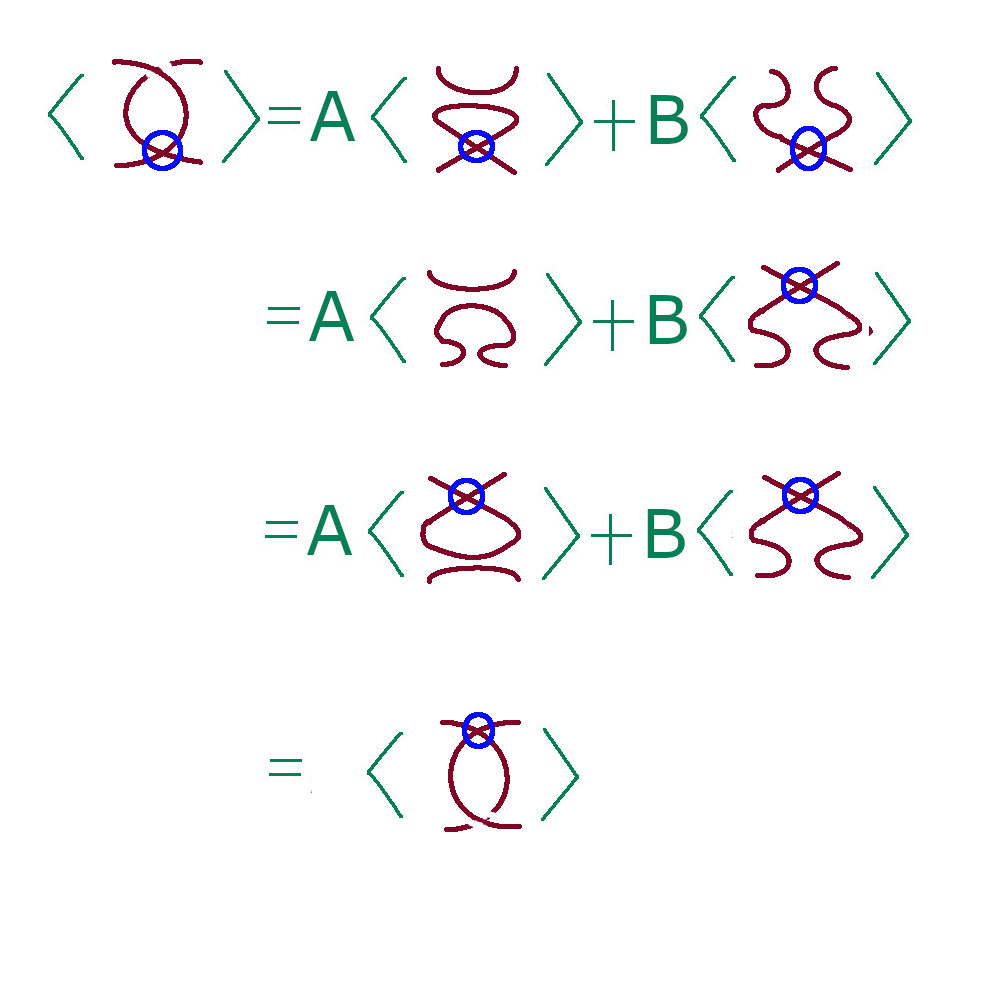}
\caption{Invariance of the bracket polynomial under the flype move.}
\label{bracketflype}
\end{figure}

\begin{proof} Refer to Figure \ref{bracketflype}. \end{proof}

\vspace{0.5cm}

\par Thus we may use the Kauffman bracket polynomial on virtual links to construct invariants of links in $\mathbb{R}P^3$. Consider the normalized bracket polynomial on virtual knot diagrams defined by setting $B=A^{-1}$ and normalizing by dividing with the loop value $-A^2-A^{-2}$. The normalized bracket of a virtual link with writhe w(K) is given by the formula,
\begin{equation*}
f_K{(A)}= \frac{-A^{-3w(K)}}{(-A^2-A^{-2})}\langle K\rangle,
\end{equation*}
a Laurent polynomial of the variable $A$.  When we change variables from $A$ to $t$ by the rule, $A=t^{-\frac{1}{4}}$ we get a Laurent polynomial in variable $t$. This is the Jones polynomial. In this paper we call $f_K$ the normalized bracket polynomial, as a function of the variable $A$.\\

\par Now for a knot $K$ in $\mathbb{R}P^3$, if $\pi(K)$ is any virtual link associated to $K$ using a diagram as described above, we define,
\begin{equation*}
f_K := f_{\pi(K)}.
\end{equation*}
 Notice that by Theorem 1, $f_K$ is well defined. In what follows we will use this notation quite frequently. The following theorem is from \cite{julia}.

\begin{thm}
If K is an affine knot in $\mathbb{R}P^3$, then the degree of each monomial in $f_K$ is an integer multiple of 4. 
\end{thm}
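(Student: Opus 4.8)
The plan is to reduce the statement to the classical integrality property of the Jones polynomial of a knot, using affineness to pass into the classical world. First I would invoke the first Corollary above (affine links map to classical virtual knots): since $K$ is affine, its associated virtual diagram $\pi(K)$ is classical, i.e. equivalent to a diagram with no virtual crossings. Because $K$ is a single-component knot and $\pi$ preserves the connectivity of the underlying object, $\pi(K)$ is in fact a classical \emph{knot} (one component), and by definition $f_K = f_{\pi(K)}$ is exactly the ordinary normalized Kauffman bracket of a genuine classical knot. Thus it suffices to prove the purely classical fact: for a classical knot $J$, every exponent of $A$ occurring in $f_J$ is divisible by $4$.

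After the change of variables $A = t^{-1/4}$, so $t = A^{-4}$, this classical fact is equivalent to the statement that the Jones polynomial $V_J(t)$ of a knot is an honest Laurent polynomial in $t$ (integer powers), with no half-integer powers. I would prove this by induction on the number of crossings using the Jones skein relation $t^{-1}V_{L_+} - t\,V_{L_-} + (t^{-1/2}-t^{1/2})V_{L_0}=0$, tracking the number of components modulo $2$. The key observation is that in a skein triple the diagrams $L_+$ and $L_-$ have the same number of components, while $L_0$ has one more or one fewer component, hence the opposite parity; combined with the factor $(t^{-1/2}-t^{1/2})$ attached to $V_{L_0}$, the induction shows that $V_L(t)$ lies in $\mathbb{Z}[t^{\pm1}]$ when $L$ has an odd number of components and in $t^{1/2}\mathbb{Z}[t^{\pm1}]$ when the number of components is even. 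Since a knot has one (odd) component, $V_J(t)\in\mathbb{Z}[t^{\pm1}]$, and translating back through $t = A^{-4}$ gives exponents of $A$ that are multiples of $4$.

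Alternatively, staying entirely within the bracket formalism already set up in the paper, one can argue directly on the state sum $\langle \pi(K)\rangle = \sum_s A^{a(s)-b(s)}(-A^2-A^{-2})^{|s|-1}$, where $a(s)$ and $b(s)$ count the two smoothing types and $|s|$ the loops. Toggling a single smoothing changes $a(s)$ by $\mp 1$ and $|s|$ by $\pm 1$, so the parity of $a(s)+|s|$ is a state-independent invariant; since each factor $(-A^2-A^{-2})^{|s|-1}$ contributes only exponents congruent mod $4$, this forces all exponents of $A$ in $\langle\pi(K)\rangle$, and hence in $f_{\pi(K)}$ after writhe normalization, to be mutually congruent modulo $4$. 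One then pins the common residue to $0$ by evaluating it on a convenient state (e.g. the all-$A$ state) and using that the number of components is one.

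I expect the main obstacle to be the component-parity bookkeeping in the second paragraph, namely verifying that the single-component hypothesis is exactly what upgrades the conclusion from \say{even exponents} to \say{multiples of $4$}: a classical \emph{link} with an even number of components would only yield exponents congruent to $2$ modulo $4$. Everything else — the passage from projective to classical via the Corollary, and the change of variables — is formal, so the content of the theorem is really this classical integrality fact together with the observation that affineness of a one-component knot delivers a one-component classical diagram.
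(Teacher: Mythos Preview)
Your proposal is correct and follows the same reduction as the paper: use affineness to obtain a classical diagram for $\pi(K)$, identify $f_K$ with the normalized bracket of that classical knot, and then invoke the classical fact that the exponents of $A$ in $f_J$ for a classical knot $J$ are multiples of $4$. The only difference is that the paper simply asserts this classical fact as known, whereas you supply two proofs of it (the skein-relation parity argument and the state-sum congruence argument); both are valid and standard, so your write-up is strictly more detailed than the paper's but not a different route.
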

\begin{proof} Note that if $J$ is a classical knot then the degree of each monomial in $f_J$ is an integer multiple of 4. Since $K$ is an affine knot it has a diagram such that the associated tangle $T$ has no boundary points. Hence there are no virtual crossings in $\pi(T)$. Let $J$ be a classical knot obtained by removing a projective plane disjoint from $K$. Notice that $f_K  := f_{\pi(T)}   = f_J   $. Hence all the monomials in $f_K $ have degree as integer multiples of 4. 
\end{proof}

\begin{cor}\label{drobcor}
If K is a knot in $\mathbb{R}P^3$ and $f_K  $ has a monomial with degree not divisible by 4, then $K$ is non-affine.
\end{cor}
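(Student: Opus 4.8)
The plan is to obtain the statement as the contrapositive of the immediately preceding theorem. That theorem asserts that affineness of $K$ forces every monomial of $f_K$ to have degree an integer multiple of $4$; the corollary is exactly the negation of its conclusion implying the negation of its hypothesis. So I would argue directly by contradiction: suppose $K$ is affine. Then by the preceding theorem every monomial appearing in $f_K$ has degree divisible by $4$, which contradicts the assumption that $f_K$ contains a monomial whose degree is not divisible by $4$. Therefore $K$ cannot be affine, i.e. $K$ is non-affine.

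Before invoking this, I would first verify that the hypothesis ``$f_K$ has a monomial with degree not divisible by $4$'' is a representative-independent, well-posed condition. This is exactly what guarantees that the degree spectrum of $f_K$ is a genuine invariant of the projective knot rather than of a chosen diagram. The needed well-definedness has already been established above: $f_K := f_{\pi(K)}$ depends only on the flype-and-Reidemeister class of $\pi(K)$, since by Theorem \ref{flypeinv} the normalized bracket is invariant under the flype move and it is invariant under the ordinary virtual Reidemeister moves. Hence the collection of degrees occurring in $f_K$ is intrinsic to $K$, and the divisibility criterion in the hypothesis is meaningful.

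I do not expect any genuine obstacle, as all the mathematical substance sits in the preceding theorem and the corollary is a purely logical reformulation of it. The single point meriting a moment of care is ensuring that the divisibility test is read off from the invariant $f_K$ and not from the bracket of some particular diagram; the well-definedness recalled above disposes of this, so the contrapositive argument goes through immediately.
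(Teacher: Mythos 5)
Your proposal is correct and matches the paper's (implicit) argument: the corollary is simply the contrapositive of the preceding theorem, and the well-definedness of $f_K$ you invoke was already established via Theorem \ref{flypeinv}. Nothing further is needed.
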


\section{Examples} \label{examples}

\begin{figure}
\centering
\includegraphics[scale=0.8]{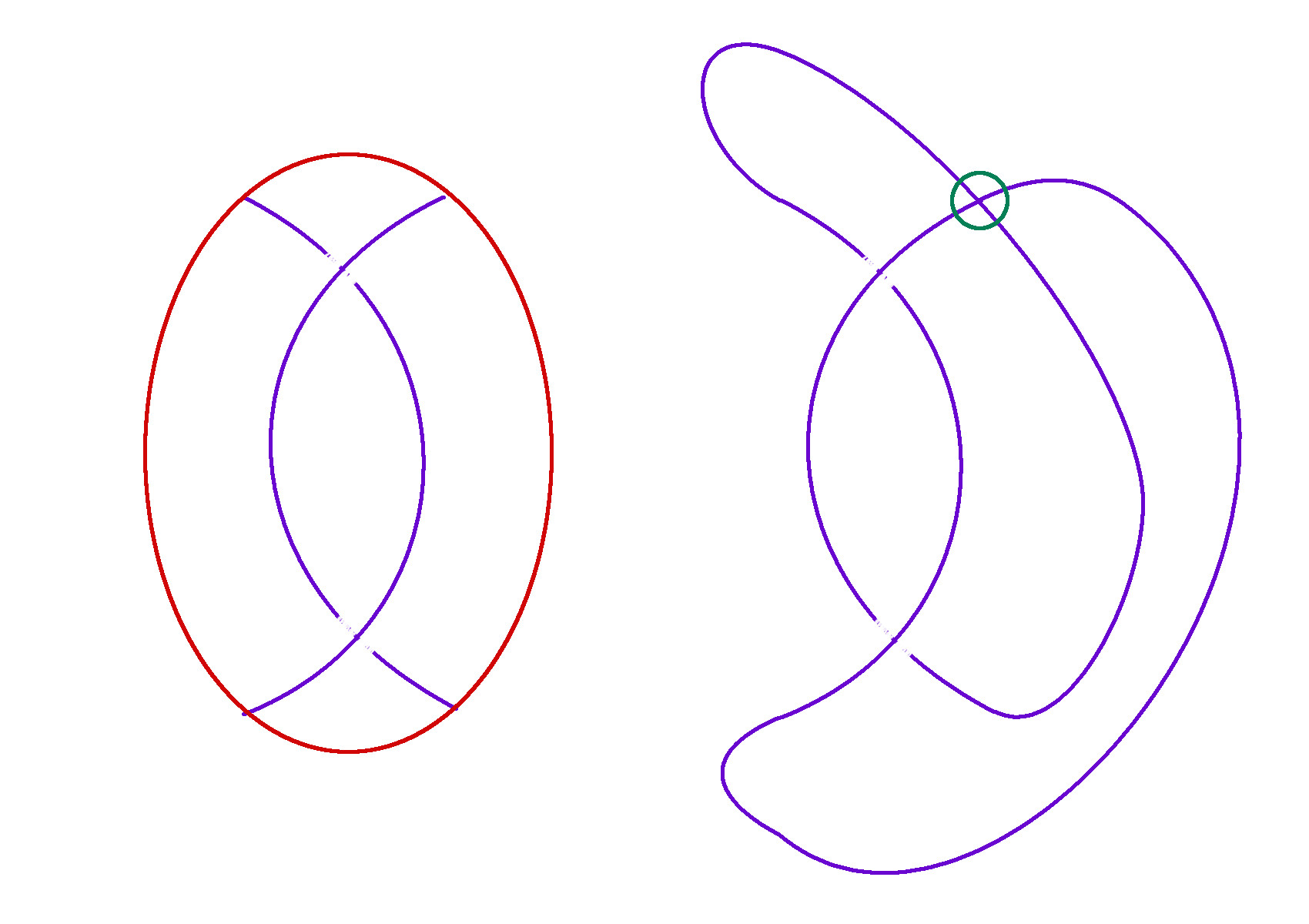}
\caption{The virtual knot associated with a class-0 knot.}
\label{class0knot}
\end{figure}

Let $J$ be the projective knot shown in the left of Figure \ref{class0knot}. It is a class-0 non-affine knot as discussed in Section 2. The virtual knot shown on the right of Figure \ref{class0knot} is constructed using the same method as above. Using this virtual knot one can easily calculate,

\begin{equation*}
f_J   = -A^{-4}-A^{-6}+A^{-10}.
\end{equation*}
Hence the normalized bracket polynomial detects that $J$ is non-affine, since two of the exponents are not divisible by 4.\\

\begin{figure}
\centering
\includegraphics[scale=0.8]{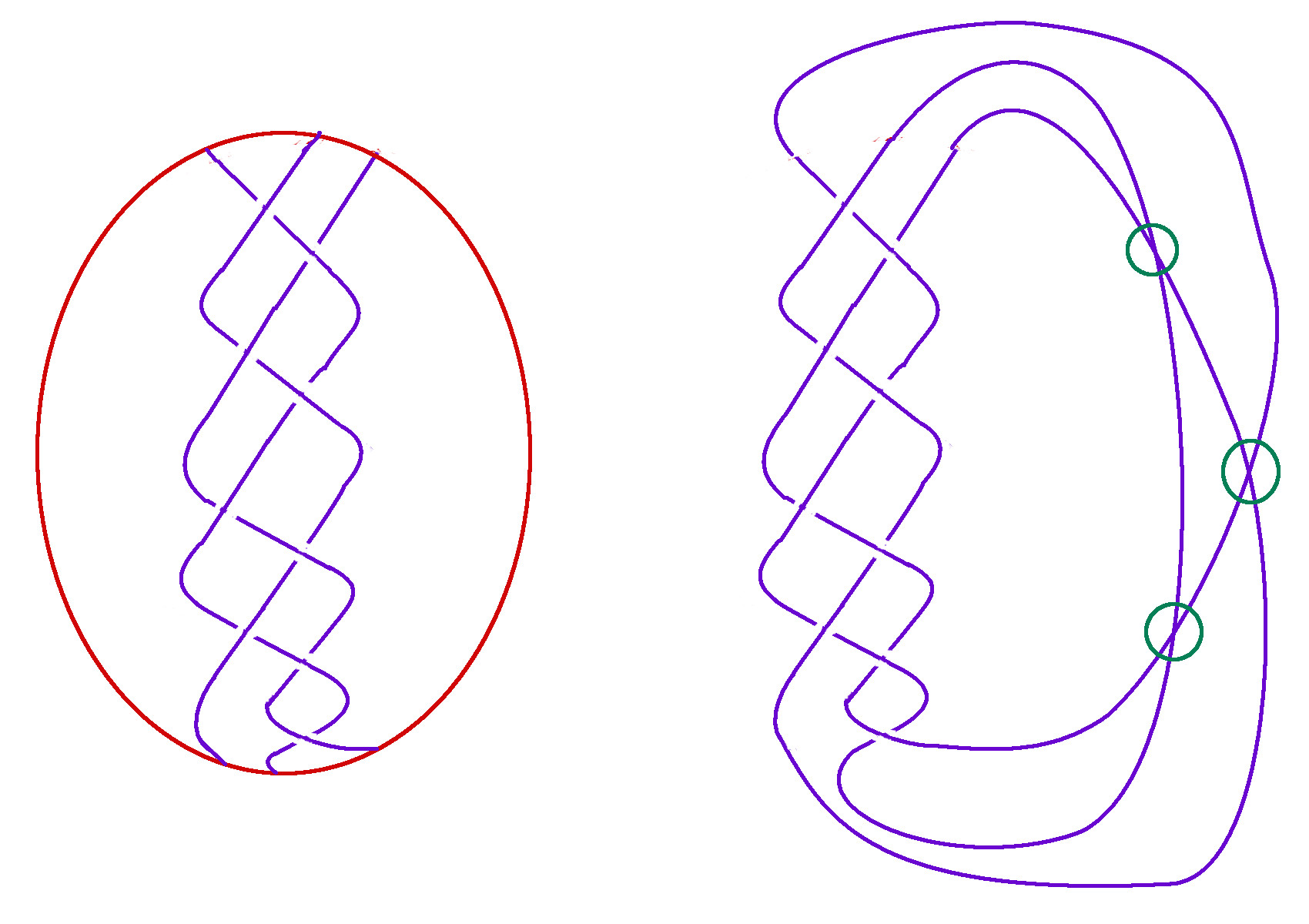}
\caption{The virtual link associated to a class-1 knot.}
\label{weave}
\end{figure}

Let $K_1$ be the knot shown in Figure \ref{weave}. We have,
\begin{equation*}
f_{K_1}   =  3+ A^{-8}-2A^{-4}-2A^{4}+2A^8-2A^{12}+A^{16}
\end{equation*}
Since the knot is class-1, it is non-affine, although the bracket does not detect it. It is noteworthy that the bracket detects the chirality of $K_1$.

\begin{figure}
\centering
\includegraphics[scale=0.8]{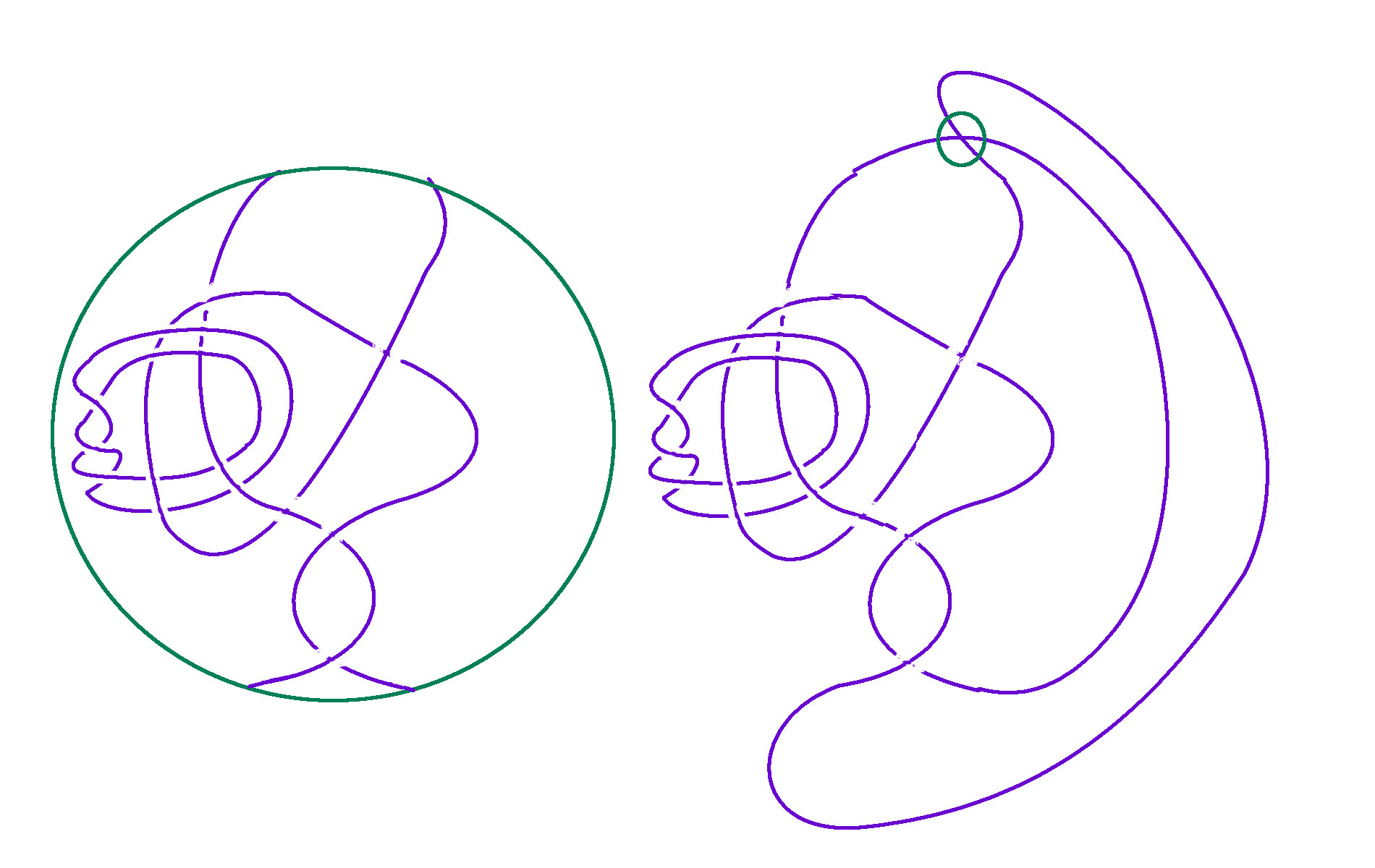}
\caption{The virtual link associated to the projective W-tangle.}
\label{wonky}
\end{figure}
Let $W_1$ be the virtual knot shown in Figure \ref{wonky}. It is a virtual knot associated to a link $K_2$ that is the projective closure of the W-tangle \cite{eli}. 

\begin{equation*}
f_{W_1}   = -A^{-8}-A^{-4}
\end{equation*} 

Thus $W_1$ has the normalized bracket of a classical knot.

\begin{figure}
\centering
\includegraphics[scale=0.7]{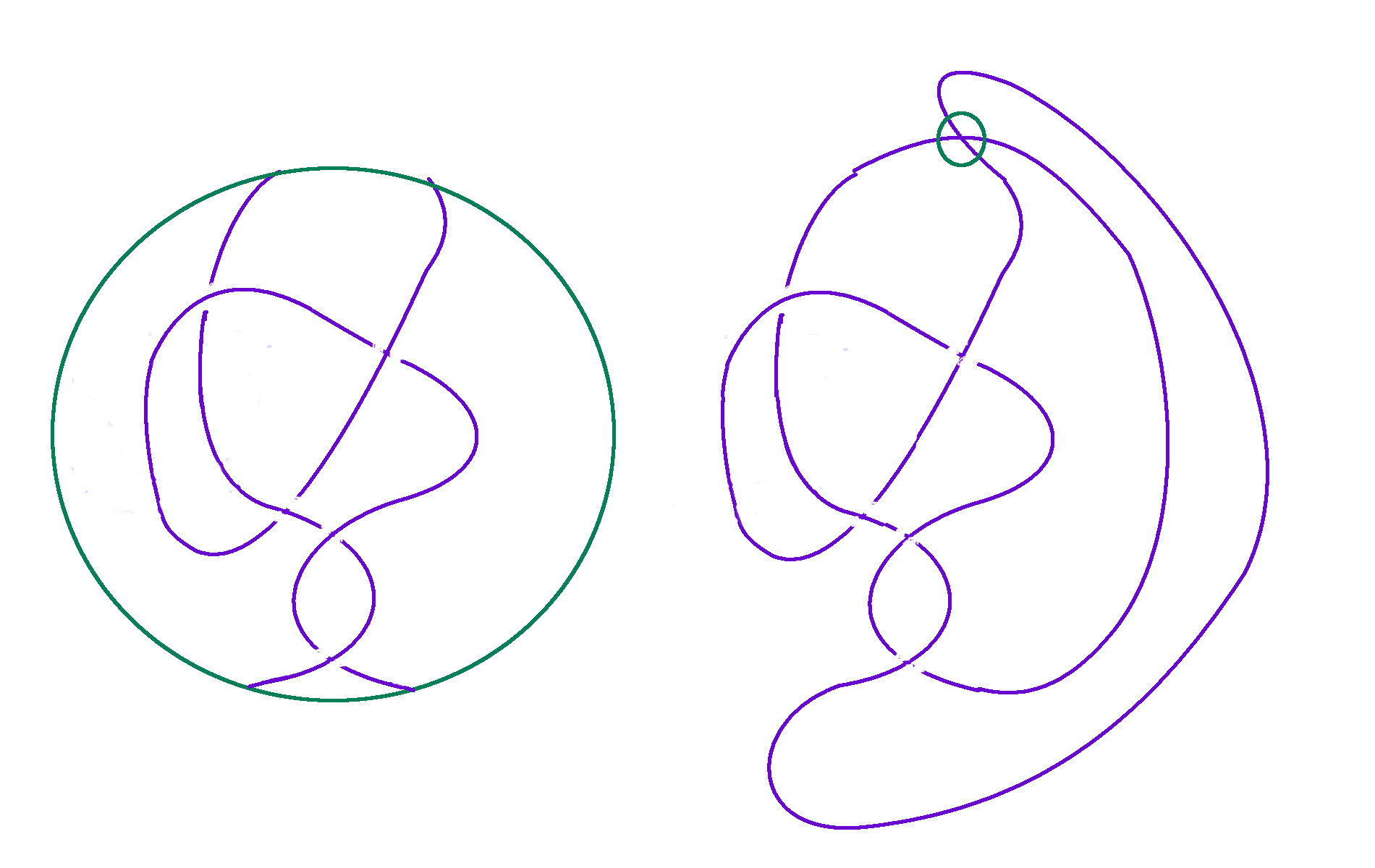}
\caption{A component of the virtual link in Figure \ref{wonky}.}
\label{wonkycomp}
\end{figure}

Now let $W_2$ be the virtual link shown in Figure \ref{wonkycomp}. It is the virtual link associated to a component of the link $K_2$, which we call $K_3$.

\begin{equation*}
f_{W_2}   = A^{-14}(1-2A^{4}+2A^{8}+A^{10}-2A^{12}-A^{14}+A^{16}+A^{18})
\end{equation*} 
Thus $K_3$ is non-affine by Corollary \ref{drobcor}. Since $K_3$ is a component of $K_2$, $K_2$ is  non-affine. 

\begin{figure}
\centering
\includegraphics[scale=0.5]{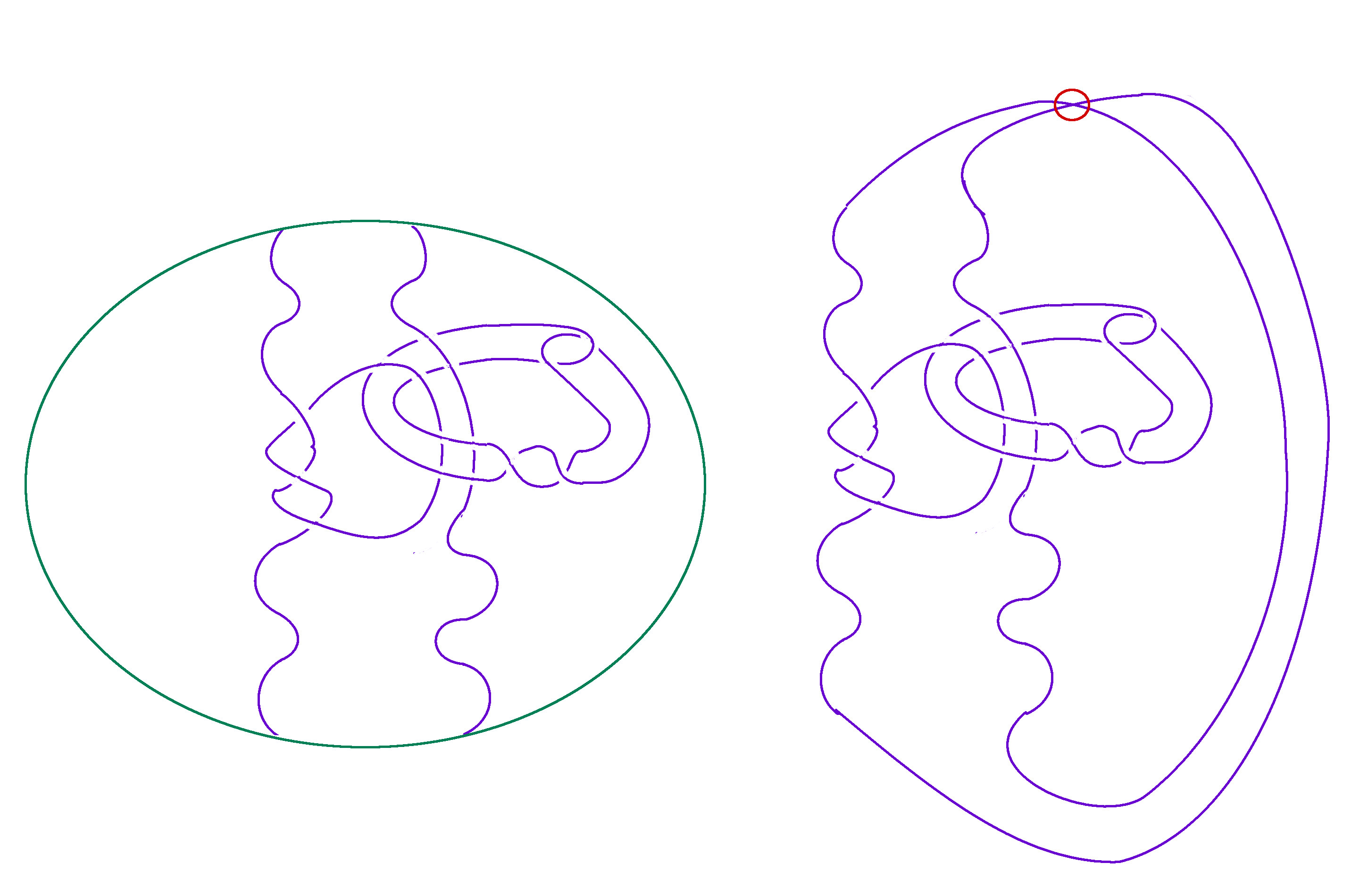}
\caption{The virtual link associated to a projective Thistlethwaite tangle.}
\label{thistle}
\end{figure}

\section{Virtual Knot Theory} \label{virtualknots}
Knot theory
studies the embeddings of curves in three-dimensional space.  Virtual knot theory studies the  embeddings of curves in thickened surfaces of arbitrary
genus, up to the addition and removal of empty handles from the surface. Virtual knots have a special diagrammatic theory, described below,
that makes handling them
very similar to the handling of classical knot diagrams. Many structures in classical knot
theory generalize to the virtual domain.
\bigbreak  

In the diagrammatic theory of virtual knots one adds 
a {\em virtual crossing} (see Figure~\ref{Figure 1}) that is neither an over-crossing nor an under-crossing.  A virtual crossing is represented by two crossing segments with a small circle
placed around the crossing point. 
\bigbreak

Moves on virtual diagrams generalize the Reidemeister moves for classical knot and link diagrams.  See Figure~\ref{Figure 1}.  One can summarize the moves on virtual diagrams by saying that the classical crossings interact with one another according to the usual Reidemeister moves while virtual crossings are artifacts of the attempt to draw the virtual structure in the plane. 
A segment of diagram consisting of a sequence of consecutive virtual crossings can be excised and a new connection made between the resulting free ends. If the new connecting segment intersects the remaining diagram (transversally) then each new intersection is taken to be virtual. Such an excision and reconnection is called a {\it detour move}. Adding the global detour move to the Reidemeister moves completes the description of moves on virtual diagrams. In Figure~\ref{Figure 1} we illustrate a set of local moves involving virtual crossings. The global detour move is a consequence of  moves (B) and (C) in Figure~\ref{Figure 1}. The detour move is illustrated in Figure~\ref{Figure 2}.  Virtual knot and link diagrams that can be connected by a finite sequence of these moves are said to be {\it equivalent} or {\it virtually isotopic}.
\bigbreak

\begin{figure}[htb]
     \begin{center}
     \begin{tabular}{c}
     \includegraphics[width=10cm]{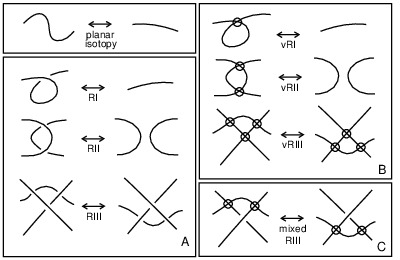}
     \end{tabular}
     \caption{\bf Moves}
     \label{Figure 1}
\end{center}
\end{figure}

\begin{figure}[htb]
     \begin{center}
     \begin{tabular}{c}
     \includegraphics[width=10cm]{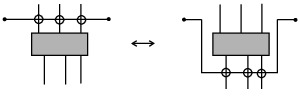}
     \end{tabular}
     \caption{\bf Detour Move}
     \label{Figure 2}
\end{center}
\end{figure}

\begin{figure}[htb]
     \begin{center}
     \begin{tabular}{c}
     \includegraphics[width=10cm]{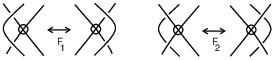}
     \end{tabular}
     \caption{\bf Forbidden Moves}
     \label{Figure 3}
\end{center}
\end{figure}

Another way to understand virtual diagrams is to regard them as representatives for oriented Gauss codes \cite{GPV}, \cite{VKT,SVKT} 
(Gauss diagrams). Such codes do not always have planar realizations. An attempt to embed such a code in the plane
leads to the production of the virtual crossings. The detour move makes the particular choice of virtual crossings 
irrelevant. {\it Virtual isotopy is the same as the equivalence relation generated on the collection
of oriented Gauss codes by abstract Reidemeister moves on these codes.}  
\bigbreak

Figure~\ref{Figure 3} illustrates the two {\it forbidden moves}. Neither of these follows from Reidemeister moves plus detour move, and indeed it is not hard to construct examples of virtual knots that are non-trivial, but will become unknotted on the application of one or both of the forbidden moves. The forbidden moves change the structure of the Gauss code and, if desired, must be considered separately from the virtual knot theory proper. 
\bigbreak

\section{Interpretation of Virtuals Links as Stable Classes of Links in  Thickened Surfaces}\label{stableclass}
There is a useful topological interpretation \cite{VKT,DVK} for this virtual theory in terms of embeddings of links
in thickened surfaces.  Regard each 
virtual crossing as a shorthand for a detour of one of the arcs in the crossing through a 1-handle
that has been attached to the 2-sphere of the original diagram.  
By interpreting each virtual crossing in this way, we
obtain an embedding of a collection of circles into a thickened surface  $S_{g} \times R$ where $g$ is the 
number of virtual crossings in the original diagram $L$, $S_{g}$ is a compact oriented surface of genus $g$
and $R$ denotes the real line.  We say that two such surface embeddings are
{\em stably equivalent} if one can be obtained from another by isotopy in the thickened surfaces, 
homeomorphisms of the surfaces and the addition or subtraction of empty handles (i.e. the knot does not go through the handle).

\begin{figure}
     \begin{center}
     \begin{tabular}{c}
     \includegraphics[width=10cm]{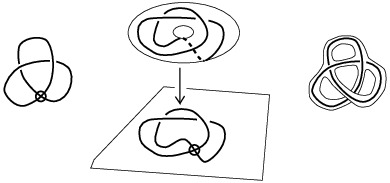}
     \end{tabular}
     \caption{\bf Surfaces and Virtuals}
     \label{Figure 4}
\end{center}
\end{figure}

\begin{figure}
     \begin{center}
     \begin{tabular}{c}
     \includegraphics[width=10cm]{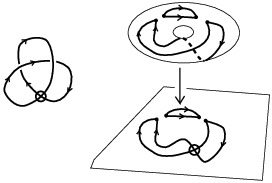}
     \end{tabular}
     \caption{\bf Surfaces and Virtual States}
     \label{Figure 5}
\end{center}
\end{figure}

\vspace{0.5cm}

\noindent We have,
\begin{thm} \cite{Carter,DVK, Kamada3} 
Two virtual link diagrams are isotopic if and only if their corresponding surface embeddings are stably equivalent.
\end{thm}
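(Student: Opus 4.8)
The plan is to bridge the two sides with the intermediate notion of an \emph{abstract link diagram}: a pair $(\Sigma, D)$, where $\Sigma$ is a compact oriented surface that is a regular neighborhood (in $\Sigma$) of the underlying $4$-valent graph of a link diagram $D$ drawn on $\Sigma$. From a virtual diagram I would build such a pair by taking a disk around each classical crossing, a band along each arc, and replacing each virtual crossing by two bands that cross transversally over an attached $1$-handle — precisely the handle attachment described in the excerpt. Thickening this surface and embedding it in $\mathbb{R}^3$ recovers the stated embedding into $S_g \times \mathbb{R}$. Conversely, from a link $L \subset \Sigma \times \mathbb{R}$ I would project to a diagram on $\Sigma$, choose a system of cutting arcs that reduces $\Sigma$ to a disk, lay the cut-open disk in the plane, and record each reglueing of the boundary as a virtual crossing. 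With these two constructions in place, the theorem splits into two implications, each of which I would verify on generators.

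For the implication that virtual isotopy implies stable equivalence, I would check that each generalized Reidemeister move lifts to a stable equivalence of the associated surface embeddings. A classical Reidemeister move takes place inside an embedded disk and is realized by an ambient isotopy of $L$ in $\Sigma \times \mathbb{R}$. The purely virtual moves and the mixed move merely rearrange the $1$-handles and reroute arcs through them, so they are realized by isotopy together with, in the cases where a handle is created or destroyed, the addition or removal of an empty handle. Since the global detour move is a consequence of the local moves (as noted above), this direction reduces to a finite, entirely local case analysis.

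For the converse, I would show that each generator of stable equivalence induces a virtual isotopy on the associated diagram. An ambient isotopy in $\Sigma \times \mathbb{R}$ alters the projected diagram on $\Sigma$ by Reidemeister moves, which translate downstairs to generalized Reidemeister moves; adding an empty handle introduces a pair of virtual crossings that the link does not use, removable by a virtual Reidemeister and detour move. The genuinely delicate generator is a homeomorphism of $\Sigma$, compounded by the fact that the reverse construction depended on a choice of cutting system: both a homeomorphism and a change of cutting arcs can drastically alter how the diagram is laid out in the plane, and hence which crossings get recorded as virtual. The main obstacle is therefore to prove that any two planar presentations obtained from the same surface link — via different but isotopic cutting systems, or after a surface homeomorphism — differ only by detour moves. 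Establishing this cleanly amounts to showing that any two cutting systems of $\Sigma$ are connected by a sequence of isotopies and handle slides, each of which corresponds downstairs to a rerouting of consecutive virtual crossings, i.e.\ to a detour move; this is the heart of the argument.
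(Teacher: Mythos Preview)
The paper does not actually prove this theorem: it is stated as a known result with references to Carter--Kamada--Saito, Kauffman, and Kamada--Kamada, and no proof is given. So there is nothing in the paper to compare your argument against.

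That said, your outline is essentially the argument found in those references, in particular the Kamada--Kamada abstract link diagram framework: build the ribbon neighborhood surface from a virtual diagram, go back by cutting a surface open to a disk and recording the identifications as virtual crossings, and then check the generators on both sides. Your identification of the delicate point --- that different cut systems (or a surface homeomorphism) must give virtually isotopic planar diagrams, and that this reduces to connecting cut systems by isotopies and handle slides which become detour moves downstairs --- is exactly where the work lies. The sketch is sound; just be aware that in a formal write-up one also needs to verify that the two constructions are mutually inverse up to stable equivalence on one side and virtual isotopy on the other, which is implicit in what you wrote but worth stating explicitly.
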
  
\smallbreak
\noindent
\bigbreak  

\noindent In Figure~\ref{Figure 4} and Figure~\ref{Figure 5} we illustrate some points about this association of virtual diagrams and knot and link diagrams on surfaces. Note the projection of the knot diagram on the torus to a diagram in the plane (in the center of the Figure \ref{Figure 4}) has a virtual crossing in the planar diagram where two arcs that do not form a crossing in the thickened surface project to the same point in the plane. In this way, virtual crossings can be regarded as artifacts of projection. The same figure shows a virtual diagram on the left and an \say{abstract knot diagram} \cite{Kamada3,Carter} on the right. The abstract knot diagram is a realization of the knot on the left in a thickened surface with boundary and it is obtained by making a neighborhood of the virtual diagram that resolves the virtual crossing into arcs that travel on separate bands. The virtual crossing appears as an artifact of the projection of this surface to the plane. The reader will find more information about this correspondence  in other papers by Kauffman \cite{VKT,DKT} and in the literature of virtual knot theory.
 
\section{Khovanov and Rasmussen Invariants of Knots in Projective Space}\label{Khovanov}
Dye, Kaestner and Kauffman in \cite{DKK}, have discussed the structure of Khovanov Homology as defined by Manturov \cite{M} for virtual knots and links and they have extended this structure to a version of Lee Homology and defined a Rasmussen invariant for virtual knots and links. We can use our mapping, 
$$\pi: \{Links\ in\ projective\ space/ambient\ isotopy\}\to PVK$$
constructed in Section 2, from projective links to virtual links to define Khovanov homology and a Rasmussen invariant for projective links by taking, for a projective link $K$, the Khovanov homology of $\pi(K)$ and the Rasmussen invariant of $\pi(K).$

\begin{figure*}
\includegraphics[scale=1.5]{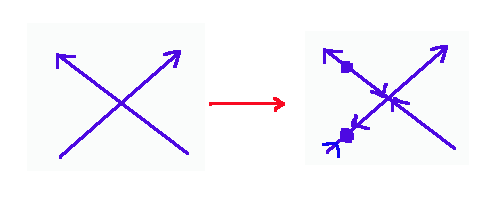}
\caption{An oriented crossing and the corresponding source-sink orientation. }
\label{sosink}
\end{figure*}

\begin{figure}
\includegraphics[scale=1.5]{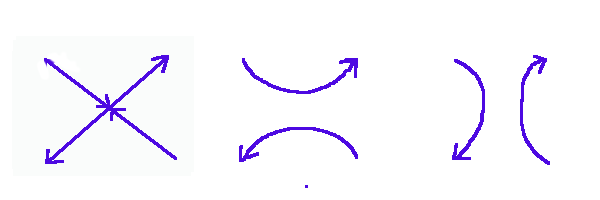}
\caption{The orientations induced on the resolved arcs.}
\label{sosinkres}
\end{figure}

\par We now will give a brief discription of the homology theory. Given a diagram of a projective knot $K$, we may construct the diagram for $\pi(K)$. Then by giving an arbitrary numbering to the crossings, of $\pi(K)$ except the virtual crossings, we can obtain the cube of resolutions. We take the convention, that all $A$ states are arranged first and all $B$ states come at last. That is, each edge corresponds to a crossing switching its $A$ smoothing to a $B$ smoothing. Each resolution consists of a collection of unknotted circles possibly with virtual crossings. To each such circle, we associate a rational vector space $V$ of dimension two, with an abstract basis $\{1, X\}$. This vector space is  a turned into a Frobenius algebra by the multiplication $m$, comultiplication $\Delta$, unit map $\nu$ and  counit map $\epsilon$. These are defined as follows,
\begin{align*}
m: V\otimes V&\to V;\\
m(1\otimes1)&=1;\\
m(1\otimes x)&= m(x\otimes1)=x;\\
m(x\otimes x)&=  0
\end{align*}
\begin{align*}
\Delta: V\to V\otimes V;\\
\Delta(1)&= 1\otimes x + x\otimes 1;\\
\Delta(x)&= x\otimes x
\end{align*}

\begin{align*}
\nu: \mathbb{Q}\to V;\\
&\nu(1)= 1
\end{align*}

\begin{align*}
\epsilon:V \to \mathbb{Q};\\
\epsilon(1)&=1;\\
\epsilon(x)&=0 
\end{align*}

\par To every vertex of the cube, which is a state consisting of several state circles, we associate a module which is the tensor product of $n$ number of copies of $V$, where $n$ is the number of state circles. For each edge which represents the change of the some crossing switching from $A$ smoothing to a $B$ smoothing, we associate one of the three possible maps between the modules on its vertices. If it is a $2-1$ bifurcation we use $m$, if its is a $1-2$ bifurcation we use $\Delta$ and if it is a $1-1$ bifurcation, we use the zero map denoted by $\eta$.\\

\par In the classical case, none of the edges of the cube will correspond to a 1-1 bifurcation and the only maps appearing on the edges are $m$ and $\Delta$'s. The signs assigned to these maps by using the ordering of crossings makes the squares anti-commute. Thus we get a cochain complex and a cohomology theory.  \\

\begin{figure*}
\includegraphics[scale=0.6]{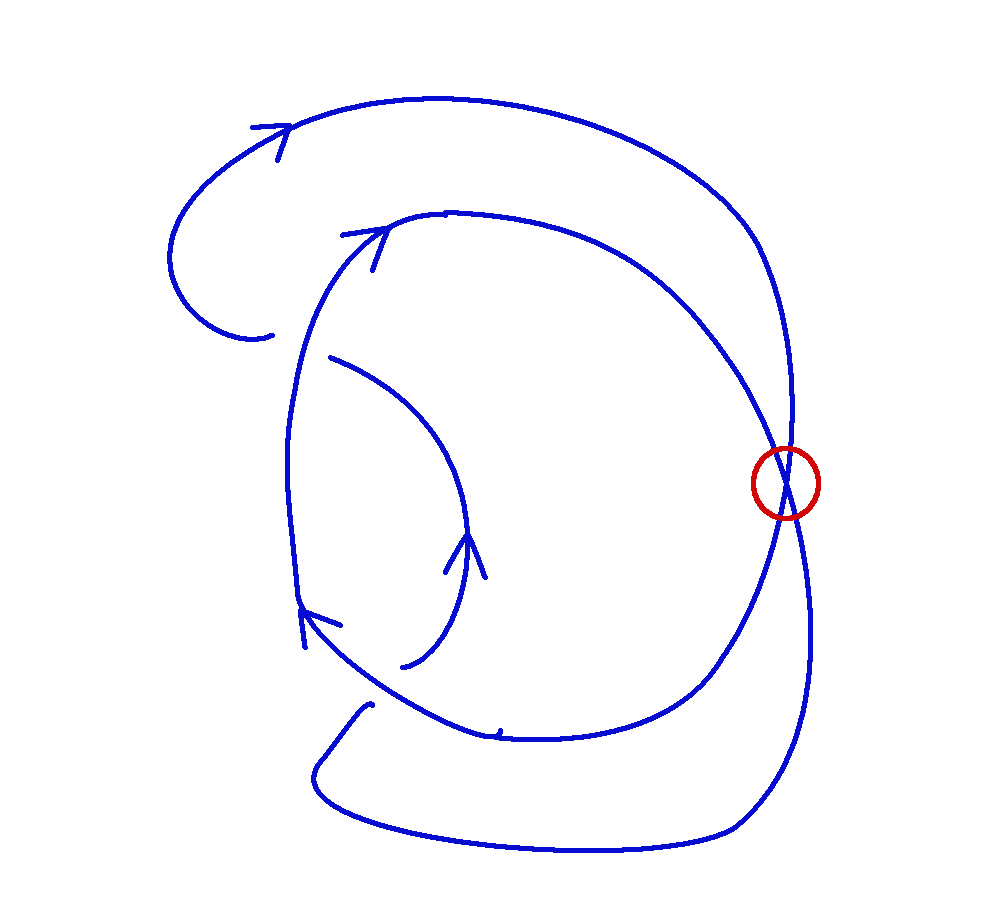}
\caption{An oriented virtual diagram corresponding to an affine unknot.}
\label{probknot}
\end{figure*}

\par In a general projective knot $K$, the diagram of $\pi(K)$ may have virtual crossings. Thus it is possible to have a $1-1$ bifurcations on some edges as in the example shown in Figure \ref{probknot}, for which the cube is shown in Figure \ref{probsquare}. To make all the squares commute, we will use the method of Manturov \cite{M} which is re-interpreted by Dye, Kaestner and Kauffman \cite{DKK}. The basic idea \cite{DKK, M} is as follows. \\

\begin{figure}
\centering
\includegraphics[scale=0.6]{probknot.png}\\
\includegraphics[scale=0.8]{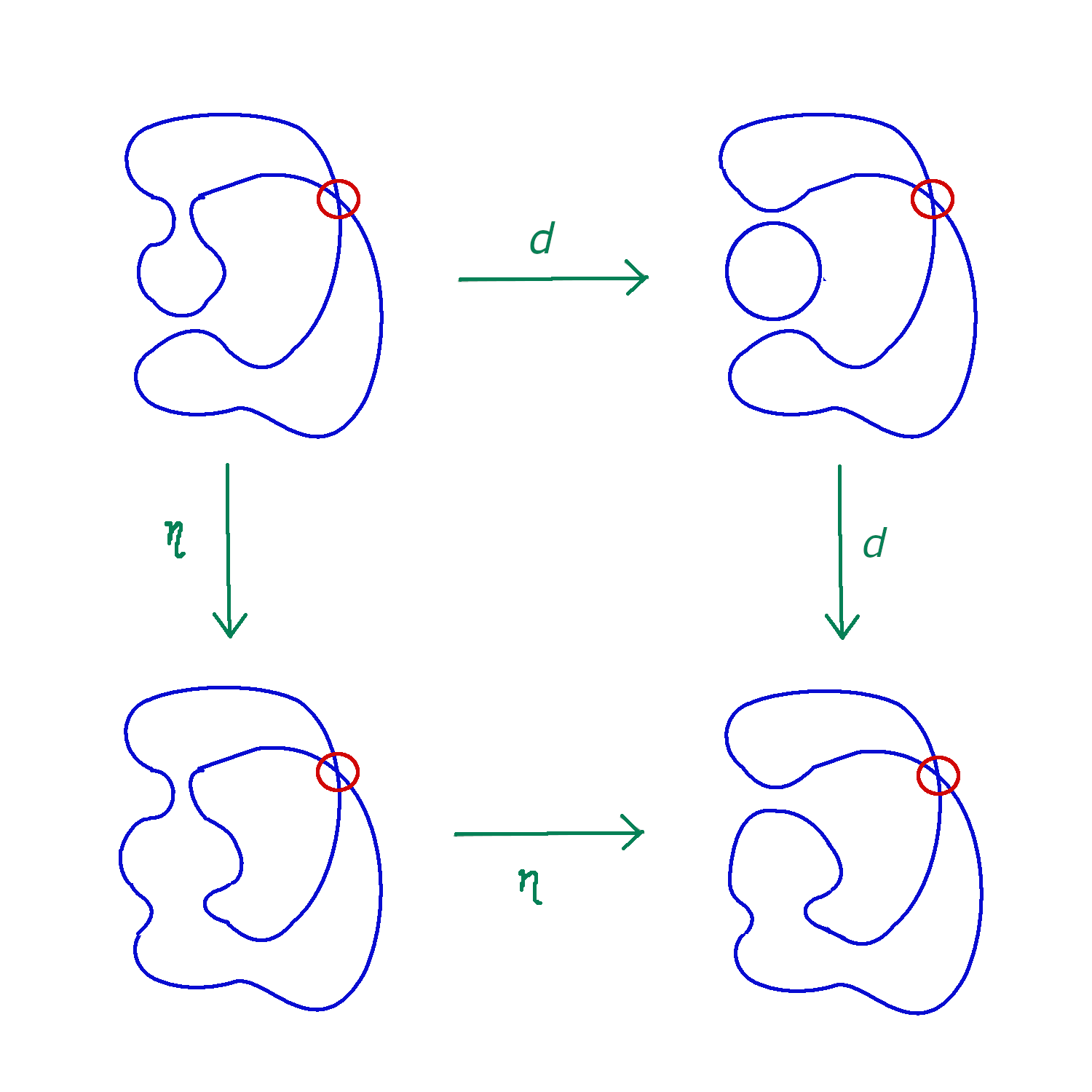}
\caption{An example of a 1-1 bifurcation. On the one hand $\eta\circ\eta=0$, so to make $d\circ d=0$ on the other side, there should be signs.}
\label{probsquare}
\end{figure}

\par Given a knot diagram, we choose an arbitrary orientation on it. See Figure \ref{probknot}. This induces an orientation at each crossing. At each crossing, we give a \say{source-sink} orientation corresponding to the induced local orientation by the scheme shown in Figure \ref{sosink}. The dots on the arcs, are called \say{cut points}, which denote the mismatch of the orientations on both sides. The source-sink orientations on crossings will induce orientations on the resolved arcs as in Figure \ref{sosinkres}. Hence in every state circle we may get multiple arcs with different orientations separated by cut points. We will mark a base point on one of these arcs on every circle. Refer to Figure \ref{basepoint}. The orientation at the base point on every state circle is chosen as a global orientation for that circle.\\

\begin{figure}
\includegraphics[scale=1]{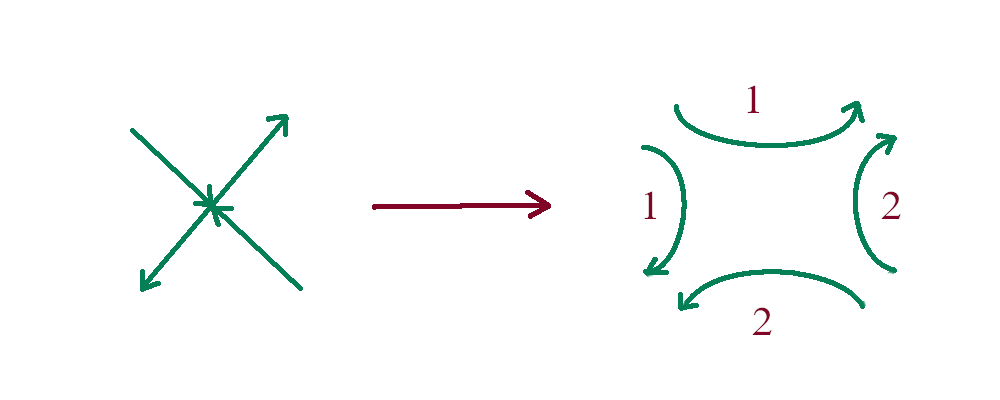}
\caption{The prescription for ordering the arcs of states.}
\label{permute}
\end{figure}

\par Now an arbitrary numbering is chosen on the state circles of each state. As a smoothing is bifurcating, the numbering on the states before and after can be compared. At the states before and after, a permutation of the circles where the circles involved in the bifurcation is promoted to the first position is considered according to the conventions indicated in Figure \ref{permute}. The signs of these permutations at each ends are added to the algebra elements on both ends of the differential. \\

\par Every time an algebra element passes through a cut point, it gets \say{barred}. Notice that, to bar and element in the Frobenius algebra, means if it is an $X$ we multiply it with $-1$, if it is $1$ then we leave it as it is and extend these operations linearly to a general element in the algebra. That is, $\forall a,b\in \mathbb{Q}$, we define:\\

\begin{equation*}
\overline{a1+bX}= a1-bX.
\end{equation*}

\par The algebra element on a state circle should be thought of as placed at the base point. Whenever the differential is applied the algebra elements at the base point(s) before is transported to the site, and the newly produced elements at the site will be transported to the new base point(s). The transports are always in the direction decided by the orientation of each of the state circles. direction of the global orientation on each circle. Before applying the differential, we bar the elements of the algebra on every circle $p$ times, where $p$ is the number of cutpoints it passes through while reaching the site from the base point before the bifurcation. Similarly the newly produced algebra elements will get barred on the path from the concerned site to the base point each time it passes a cut point. And for the algebra elements on the circles, which are not taking part in the bifurcation, they will be transported to the new base point after bifurcation from the initial base point and barred accordingly. \\

\par With this sign convention and the standard conventions in classic Khovanov homology, $d$ becomes a differential. That is, $d\circ d=0$ and we have a graded cohomology theory for all the  $\pi(K)$. 

\begin{prop}\label{khoeq}
If two projective knots $K$ and $K'$ are isotopic, the cohomology groups of $\pi(K)$ and $\pi(K')$ are isomorphic. 
\end{prop}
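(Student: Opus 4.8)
The plan is to reduce the statement to invariance of the virtual Khovanov cohomology under the moves that generate the equivalence on $PVK$, and then to place all of the real work on the Flype move. By Proposition 1, if $K$ and $K'$ are isotopic in $\mathbb{R}P^3$ then $\pi(K)$ and $\pi(K')$ are connected by a finite sequence of ordinary virtual Reidemeister moves and Flype moves; since isomorphism of cohomology is transitive it suffices to treat a single move of each type. Invariance under the generalized virtual Reidemeister moves is exactly the content of the Manturov construction as reinterpreted in \cite{DKK, M}, so I would cite that directly. The genuinely new point is invariance under the Flype move of Figure \ref{flype}.

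For the Flype I would argue locally, mirroring the state-sum computation behind Theorem \ref{flypeinv} (Figure \ref{bracketflype}) but lifted to the chain level. A Flype relocates a single classical crossing past a configuration of virtual crossings, and, as observed around Figure \ref{crosslide}, it preserves the sign and the over/under type of that crossing. Consequently the number of classical crossings and the writhe are unchanged, the cube of resolutions on the two sides is indexed by the same subsets of crossings, and the homological and quantum grading shifts agree. First I would fix this canonical identification of the two cubes. Then, at each vertex, I would use the fact that resolving the flyped crossing yields a state diagram differing from the unflyped one only by detour moves of virtual crossings, which change neither the number of state circles nor the partition of arcs among circles. This gives, at every vertex, a canonical bijection of state circles and hence a canonical isomorphism of the modules $V^{\otimes n}$; because the bijection is consistent across adjacent vertices it automatically preserves the bifurcation type of each edge, so the edge maps $m$, $\Delta$ and $\eta$ are matched up with their counterparts.

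The heart of the argument, and the step I expect to be the main obstacle, is checking that this vertex-wise isomorphism intertwines the differentials together with the Dye--Kaestner--Kauffman sign data. Here one must follow, through the Flype, the source--sink orientation, the placement of cut points, the base point on each state circle, the promotion permutation of Figure \ref{permute}, and the barring operation $\overline{a1+bX}=a1-bX$. Because the Flype drags the classical crossing across virtual crossings, it also drags it across cut points, so the number of bars accumulated by an algebra element transported from a base point to a bifurcation site can change. I would analyze a standard local model of the Flype and show that, for each of $m$, $\Delta$ and $\eta$, the edge map commutes with the circle bijection up to a sign that is either trivial or can be absorbed by rescaling the vertex isomorphisms in a consistent way, so that the identification is an honest isomorphism (or, should the bifurcation structure force cancellation, a chain homotopy equivalence) of cochain complexes.

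Finally, having matched differentials and signs on the local model, I would note that the portions of the complex away from the Flype region are untouched, so the local isomorphism extends to an isomorphism of the full cochain complexes of $\pi(K)$ and $\pi(K')$ and therefore induces the desired isomorphism on cohomology. The one point needing care is global consistency of the base-point and cut-point choices: since these choices affect the complex only up to isomorphism, which is part of the well-definedness already in place for $\pi(K)$, I would fix compatible choices on the two diagrams before comparing, so that the local sign analysis assembles without ambiguity.
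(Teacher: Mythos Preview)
Your proposal is correct and follows essentially the same route as the paper: reduce to a single move, invoke the Manturov/DKK invariance for the ordinary virtual Reidemeister moves, and handle the Flype by lifting the state-by-state matching behind Theorem~\ref{flypeinv} (Figure~\ref{bracketflype}) to the chain level. The paper frames the reduction via Drobotukhina's slide moves on projective diagrams rather than via Proposition~1 on the virtual side, but this is the same thing under $\pi$; the paper is also much terser than you are about the DKK sign data, simply asserting that ``from the proof of Theorem~\ref{flypeinv} it should be clear'' that one obtains chain maps from the standard maps in virtual Khovanov homology, whereas you correctly flag the cut-point/base-point bookkeeping as the step that actually needs to be checked.
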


\begin{proof} As proved by Drobotukhina in \cite{julia}, the diagrams of $K$ and $K'$ are related by a sequence of classical Reidemeister moves and two slide moves. It must be clear that if two diagrams are related by a classical Reidemeister move, then the corresponding chain complexes of their virtual images are related by the standard chain maps which induces isomorphisms on cohomology. Similarly, if two diagrams are related by the \say{sliding a maxima} move, then the corresponding virtual images are related by a virtual Reidemeister move and hence the there is a chain map between their complexes inducing isomorphisms on cohomology given by the construction of virtual Khovanov homology. And finally, from the proof of Theorem \ref{flypeinv}, it should be clear that if two diagrams are related by the \say{sliding a crossing} move, then we get a chain map inducing isomorphism on cohomology from the standard maps in virtual Khovanov homology. Thus the cohomology of $\pi(K)$ only depends on the isotopy type of $K$. 
\end{proof}

\begin{figure}
\includegraphics[scale=0.6]{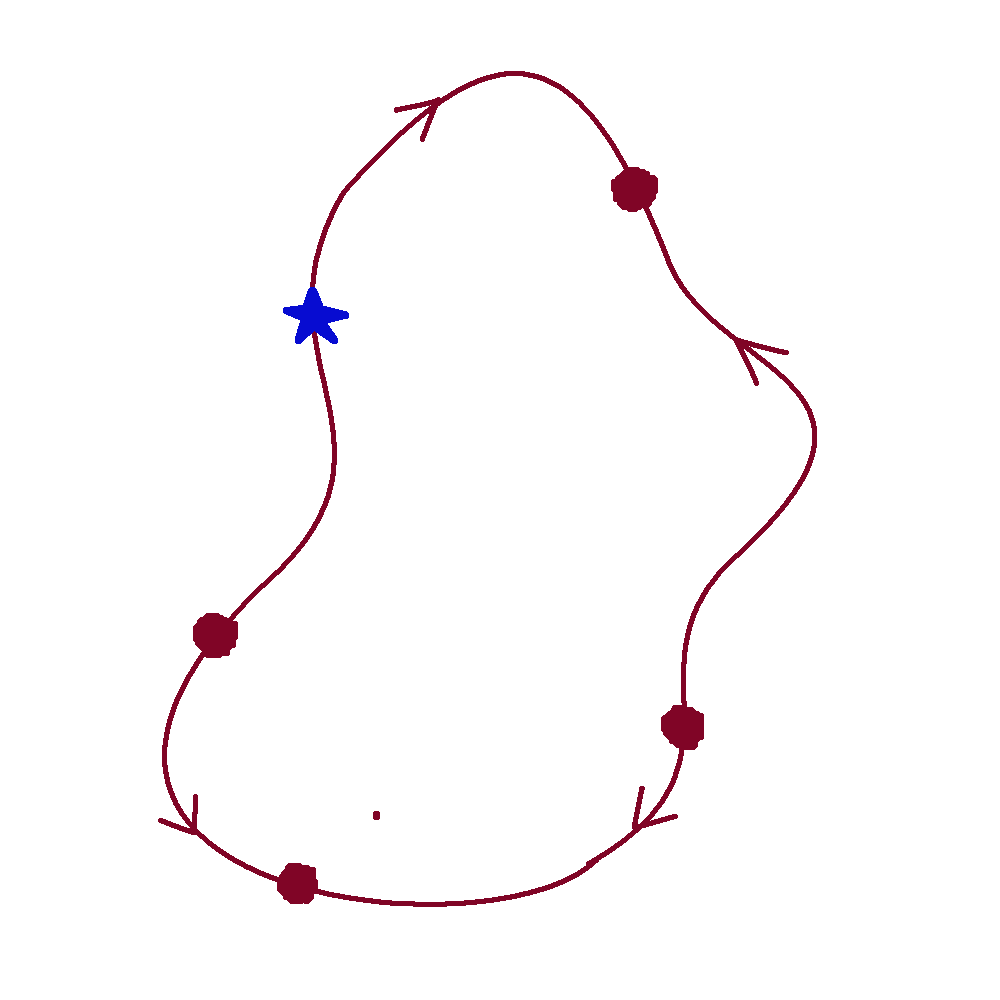}
\caption{A typical state circle. The star denotes a base point. The orientation of this arc will be chosen as the direction in which the elements in the algebra flows through this state circle. }
\label{basepoint}
\end{figure}

We define, \\
\begin{equation*}
Kh(K) := Kh_{DKK}(\pi(K))
\end{equation*}

A major theorem of DKK (Theorem \ref{dkkthm}) which is used in the rest of the paper uses Rasmussen invariant for virtual links for its proof. To do this in projective situation we need to have a definition of Rasmussen invariant $s(K)$, of a projective knot or link $K$. Notice that the Lee deformation extended naturally to the Khovanov complex of a virtual knot is compatible with all the standard chain maps induced by classical and virtual Reidemeister moves. See \cite{DKK} for more details. As shown in the proof of Proposition \ref{khoeq} any slide move performed on a projective diagram induce chain maps on the virtual Khovanov complex which can be deformed as a composition of maps induced by Reidemeister moves. Thus in the same spirit as above we define;\\
\begin{equation*}
s(K) := s_{DKK}(\pi(K)).
\end{equation*}

\par  In order to obtain a result about projective links from this result, we need to discuss cobordism of projective links and how it is related to corresponding cobordisms of the virtual links in the image of the mapping $\pi$. Two projective links $K$ and $K'$ are said to be \textit{cobordant}, if there is a smoothly embedded surface $\Sigma$ in $\mathbb{R}P^3\times I$, whose boundary is entirely inside the boundary of $\mathbb{R}P^3\times I$ so that $\partial \Sigma \cap (\mathbb{R}P^3\times \{0\}) = K$ and $\partial \Sigma \cap (\mathbb{R}P^3\times \{1\}) = K'$. It is clear from Morse theory, that any such surface can be constructed from the fundamental parts: birth, death and saddles shown in Figure \ref{BDS}. The diagrams of links at two ends of each of these surfaces are easy to see. Birth and death will correspond to appearance and disappearance of a circle unlinked with the rest of the diagram. Similarly saddles correspond to two oppositely oriented arcs changing from one smoothing to the other. See Figure \ref{BDS}. Hence this notion of cobordism can be explained combinatorially at the diagram level. We say that two projective link diagrams are {\it cobordant} if one can be obtained from the other by a sequence of oriented saddle replacements, births and deaths of circles and isotopy of projective links. See Figure~\ref{BDS}. The result of such a sequence of changes in the diagram can be regarded formally as the generation of an orientable surface whose boundary consists in the first and second diagrams. If diagrams $K$ and $K'$ are cobordant, then the genus of the cobordism surface is called the {genus of the cobordism between $K$ and $K'.$} These notions apply to classical diagrams, to diagrams of links in $RP^{2}$ that represent projective links in $RP^{3},$ and to virtual link diagrams. In the classical and virtual cases, every link is cobordant to the empty link. That is, for a given diagram $K$ there is a sequence of saddles, births and deaths that results in an empty diagram. Correspondingly, there is a surface whose boundary is the link $K$. In the projective case, this is true only for null homologous links. In the case of a class-1 link we can always construct a cobordism to an unknotted loop that represents a projective line in $RP^{2}$ diagrammatically the class-1 unknot in $RP^3$ in terms of the corresponding embedding in the three dimensional projective space. We call these surfaces the {\it 4-ball surfaces for $K$} uniformly where this terminology does in fact indicate a surface in the four-ball in the classical case. We call the least possible genus among all such surfaces the {\it 4-ball genus $g_{4}(K)$.} \\

\par In Figure~\ref{CSS} we illustrate the construction of the Seifert surface for a classical trefoil knot. In Figure~\ref{VCS} we show how the Seifert construction can be seen as cobordism by performing saddle points near each crossing. In Figure~\ref{VSS} we show how this cobordism analogue for the Seifert construction can be used to produce a virtual Seifert surface for a virtual knot.\\

The main theorem of \cite{DKK} on virtual knot cobordism using Khovanov homology for virtuals is,\\ 

\begin{thm}\label{dkkthm} The virtual 4-ball genus of a positive virtual link $L$ is equal to its virtual Seifert genus. The virtual Seifert genus has the specific formula $g = \frac{ (C+1 - S)}{2}$ where $C$ is the number of classical crossings in $L$ and $S$ is the number of Seifert circuits, obtained by smoothing all the crossings in an oriented fashion. 
\end{thm}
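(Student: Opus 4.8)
The plan is to establish the equality by sandwiching the 4-ball genus between the virtual Seifert genus (an upper bound) and half the virtual Rasmussen invariant (a lower bound), and then to show that for a positive diagram the two bounds coincide. This mirrors Rasmussen's classical argument that $s(K) = 2g_4(K) = 2g_3(K)$ for positive knots, now carried out with the DKK virtual Lee theory in place of the classical one. The role of positivity is that it pins down the oriented resolution in the cube, which is where the whole computation lives.

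First I would verify the genus formula together with the upper bound. The virtual Seifert surface associated to the oriented resolution (Figures \ref{VCS} and \ref{VSS}) is assembled from $S$ disks joined by $C$ bands, so its Euler characteristic is $\chi = S - C$. For a single boundary component this gives $\chi = 1 - 2g$, hence $g = \frac{C + 1 - S}{2}$, which is the stated virtual Seifert genus; the link case is the same Euler-characteristic bookkeeping with the boundary count adjusted. Since this surface is a genuine cobordism surface for $L$ in the thickened-surface $\times\, I$ model, it realizes $g_4(L) \le \frac{C+1-S}{2}$.

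For the matching lower bound I would use the virtual Rasmussen invariant $s_{DKK}$. The key input is functoriality: each elementary cobordism (birth, death, saddle) induces a filtered map on virtual Lee homology whose filtration shift equals the Euler characteristic of that elementary piece, exactly as in the classical theory. Composing the pieces of a genus-$g$ 4-ball surface then yields the slice bound $g_4(L) \ge \frac{s_{DKK}(L)}{2}$. It remains to compute $s_{DKK}(L)$ for a positive diagram. Here I would locate the two canonical Lee generators in the oriented (Seifert) resolution; because every classical crossing is positive this resolution sits at homological degree $0$ and carries $S$ state circles. A grading count of the lowest homogeneous part $\bigotimes X$ of the canonical generator gives quantum grading $C - S$, whence $s_{DKK}(L) = C - S + 1 = C + 1 - S$. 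Combining the three estimates, $\frac{C+1-S}{2} = g_{\mathrm{Seifert}} \ge g_4(L) \ge \frac{s_{DKK}(L)}{2} = \frac{C+1-S}{2}$, forcing equality throughout and proving the theorem.

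The hard part will be the computation of $s_{DKK}(L)$ in the virtual setting, not the sandwich itself. Unlike the classical cube, the virtual cube of resolutions can contain $1$--$1$ bifurcations and carries the cut-point, barring, and permutation-sign data needed to force $d \circ d = 0$; one must check that these signs neither destroy the canonical Lee generators nor shift their filtration, so that the oriented resolution still realizes the extremal grading underlying $s = C+1-S$. I would also need to confirm that these generators survive to virtual Lee homology with the expected rank and with representatives supported on the oriented resolution, and that the filtered cobordism maps behave as claimed through the barred differential. Once these structural facts about the DKK theory are in hand, the positivity of the diagram renders the grading bookkeeping identical to Rasmussen's, and the result follows.
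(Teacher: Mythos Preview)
The paper does not actually prove Theorem~\ref{dkkthm}; it is quoted verbatim as ``the main theorem of \cite{DKK}'' and used as a black box to deduce Theorem~\ref{ourthm}. So there is no proof in the paper to compare your proposal against.

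That said, your outline is precisely the argument of \cite{DKK}: the Seifert surface gives the upper bound $g_4 \le \tfrac{C+1-S}{2}$, the filtered cobordism maps on virtual Lee homology give the lower bound $g_4 \ge \tfrac{s_{DKK}}{2}$, and positivity pins the canonical Lee generators to the oriented resolution so that $s_{DKK} = C+1-S$. Your caveats about the $1$--$1$ bifurcations, cut-point signs, and survival of the canonical generators are exactly the technical content of \cite{DKK} that distinguishes the virtual case from Rasmussen's classical proof. In short, your plan is correct and is the DKK proof; it is simply not reproduced in this paper.
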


\begin{thm} \label{ourthm}
The 4-ball genus of a positive projective knot is equal to its Seifert genus. 
\end{thm}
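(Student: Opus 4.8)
The plan is to transport the problem across the map $\pi$ into the virtual world and then invoke Theorem \ref{dkkthm}. The first step is to check that $\pi$ carries a positive projective knot to a positive virtual knot. Each classical crossing of $K$ corresponds to exactly one classical crossing of $\pi(K)$, while the arcs that $\pi$ adds between identified boundary points meet the diagram only in virtual crossings; since the construction preserves oriented crossing signs (this is the content of the slide/flype discussion, cf. Figure \ref{crosslide}), a diagram of $K$ with all crossings positive produces a diagram of $\pi(K)$ all of whose \emph{classical} crossings are positive. Hence $\pi(K)$ is a positive virtual knot and Theorem \ref{dkkthm} applies to it.

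The second step is to match the two notions of genus on either side of $\pi$. For the Seifert genus this is a bookkeeping identity: oriented smoothing is local at classical crossings and ignores virtual crossings, so the number $C$ of classical crossings and the number $S$ of Seifert circuits of $\pi(K)$ agree with the corresponding counts for the chosen positive diagram of $K$. Consequently the virtual Seifert genus $\tfrac{(C+1-S)}{2}$ of $\pi(K)$ equals the genus of the surface produced by Seifert's algorithm on $K$, i.e. $g_{Seifert}(K)$. For the $4$-ball genus I would argue that $\pi$ sends projective cobordisms to virtual cobordisms of the same genus: the elementary pieces of a projective cobordism (the births, deaths and oriented saddles of Figure \ref{BDS}) all take place in the interior of the projective disk, where $\pi$ acts trivially, and an ambient isotopy of projective links is carried by $\pi$ to a sequence of virtual Reidemeister and flype moves, each a genus-zero cobordism of virtual links. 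A genus-$g$ projective $4$-ball surface for $K$ therefore yields a genus-$g$ virtual $4$-ball surface for $\pi(K)$, giving $g_4^{v}(\pi(K)) \le g_4(K)$; this is the mechanism by which the virtual Rasmussen lower bound, applied through $s(K)=s_{DKK}(\pi(K))$, controls the projective slice genus.

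With these two translations in hand the theorem falls out of an inequality sandwich. On the one hand any Seifert surface for $K$ pushes into the collar $\mathbb{R}P^3 \times I$ to give a $4$-ball surface of the same genus, so $g_4(K) \le g_{Seifert}(K)$. On the other hand,
\[
g_{Seifert}(K) = g_{Seifert}^{v}(\pi(K)) = g_4^{v}(\pi(K)) \le g_4(K),
\]
where the middle equality is Theorem \ref{dkkthm} applied to the positive virtual knot $\pi(K)$. Combining the two inequalities forces equality throughout, which is the assertion.

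The step I expect to be the real obstacle is the genus-preserving correspondence of cobordisms in the second paragraph, together with pinning down exactly what \emph{Seifert genus} of a projective knot should mean. One must verify that a projective cobordism decomposes into births, deaths, saddles and isotopies in a way compatible with $\pi$ so that no genus is lost or gained, and one must treat the class-$0$ and class-$1$ cases separately, since a class-$1$ knot is not null-homologous and its Seifert/$4$-ball surface is a cobordism to the projective-line unknot rather than to the empty link. Confirming that the counts $C$ and $S$ and the resulting genus formula behave correctly under this extended notion, and that the DKK lower bound genuinely descends along $\pi$, is where the care lies; once that is secured the algebra above is immediate.
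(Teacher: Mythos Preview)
Your proposal is correct and follows essentially the same route as the paper: transport the problem through $\pi$, observe that positivity and the cobordism data (hence both the Seifert and $4$-ball genera) are preserved, and then invoke Theorem~\ref{dkkthm}. The paper's proof is in fact terser than yours---it simply asserts that cobordant projective links map to cobordant virtual links and that positivity is preserved, then appeals to Theorem~\ref{dkkthm}---so your explicit inequality sandwich and your flagging of the class-$1$ subtlety are elaborations the paper leaves to the surrounding discussion rather than to the proof itself.
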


\begin{proof} 
From a link $K$ in $RP^{3}$ by applying our mapping $\pi$ we obtain a virtual link $\pi(K)$. It is then the case that if $K$ and $K'$ are cobordant projective links, then $\pi(K)$ and $\pi(K')$ are cobordant virtual links.$\ K$ is positive if and only if $\pi(K)$ is positive. Hence the theorem follows from applying Theorem \ref{dkkthm}. 
\end{proof}

If $K$ is a projective link which has an odd number of class-1 components (a class-1 link) then there is a cobordism that will always take $K$ to a class-1 unknot. Call this $\Omega$. Then $\pi(\Omega)$ is an unknotted virtual loop and hence bounds a disk. This means that cobordisms of class-1 projective links differ from the cobordisms we can apply to their virtual counterparts. But we can define the genus of a class-1 link $K$ to be the minimal genus of a surface that is a cobordism of $K$ to $\Omega.$ Since $\pi(\Omega)$ bounds a disk in the virtual context, we see that, $g_{4}(\pi(K))$ is equal to this minimal genus for $K$ in projective space.\\

For a positive non-affine knot $K$ that is not class-1, Theorem \ref{ourthm} applies to give $g_{4}(K)$. For example, see Figure~\ref{PG} where we illustrate such a knot in the lower portion of the figure and determine that $g=g_4 (K)= 1$ by finding that the virtual version of $K$ has $C=2, S=1$ and applying the formula $g = \frac{(C+1 - S)}{2} = \frac{(2+1 - 1)}{2} = 1.$\\

\begin{figure}
     \begin{center}
     \begin{tabular}{c}
     \includegraphics[width=6cm]{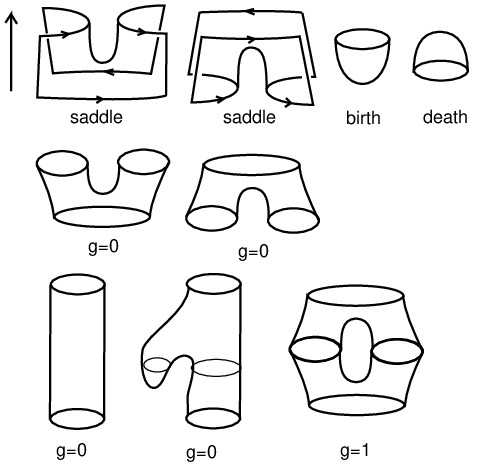}
     \end{tabular}
     \caption{\bf Cobordism of Diagrams}
     \label{BDS}
\end{center}
\end{figure}

\begin{figure}
     \begin{center}
     \begin{tabular}{c}
     \includegraphics[width=6cm]{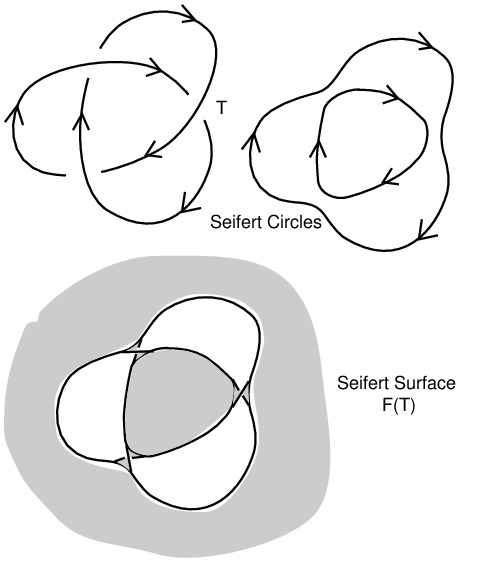}
     \end{tabular}
     \caption{\bf Classical Seifert Surface}
     \label{CSS}
\end{center}
\end{figure}

\begin{figure}
     \begin{center}
     \begin{tabular}{c}
     \includegraphics[width=6cm]{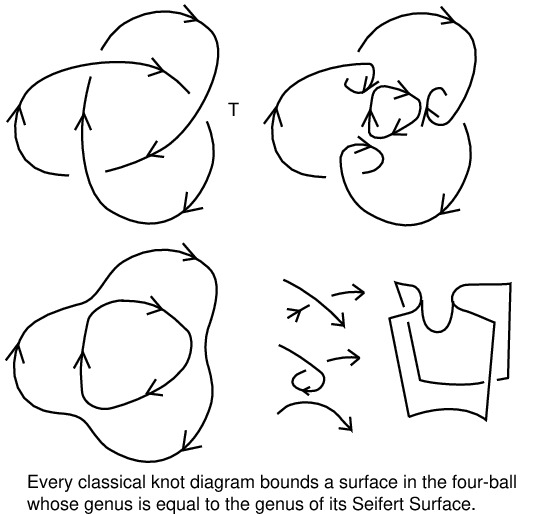}
     \end{tabular}
     \caption{\bf Virtual Cobordism Surface}
     \label{VCS}
\end{center}
\end{figure}

\begin{figure}
     \begin{center}
     \begin{tabular}{c}
     \includegraphics[width=6cm]{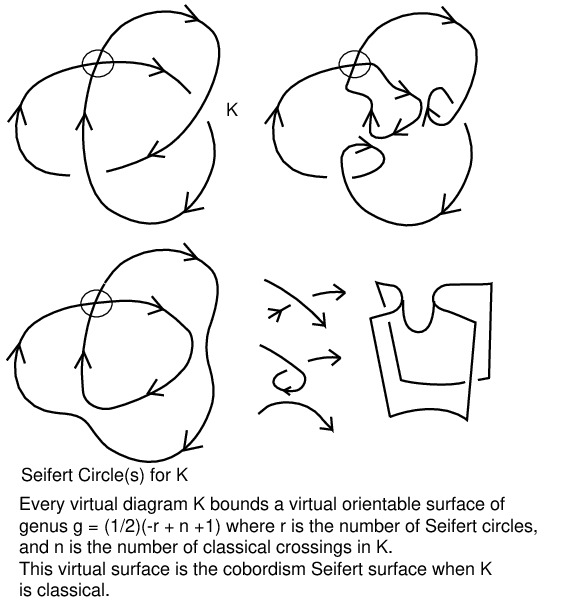}
     \end{tabular}
     \caption{\bf Virtual Seifert Surface}
     \label{VSS}
\end{center}
\end{figure}

\begin{figure}
     \begin{center}
     \includegraphics[width=8cm]{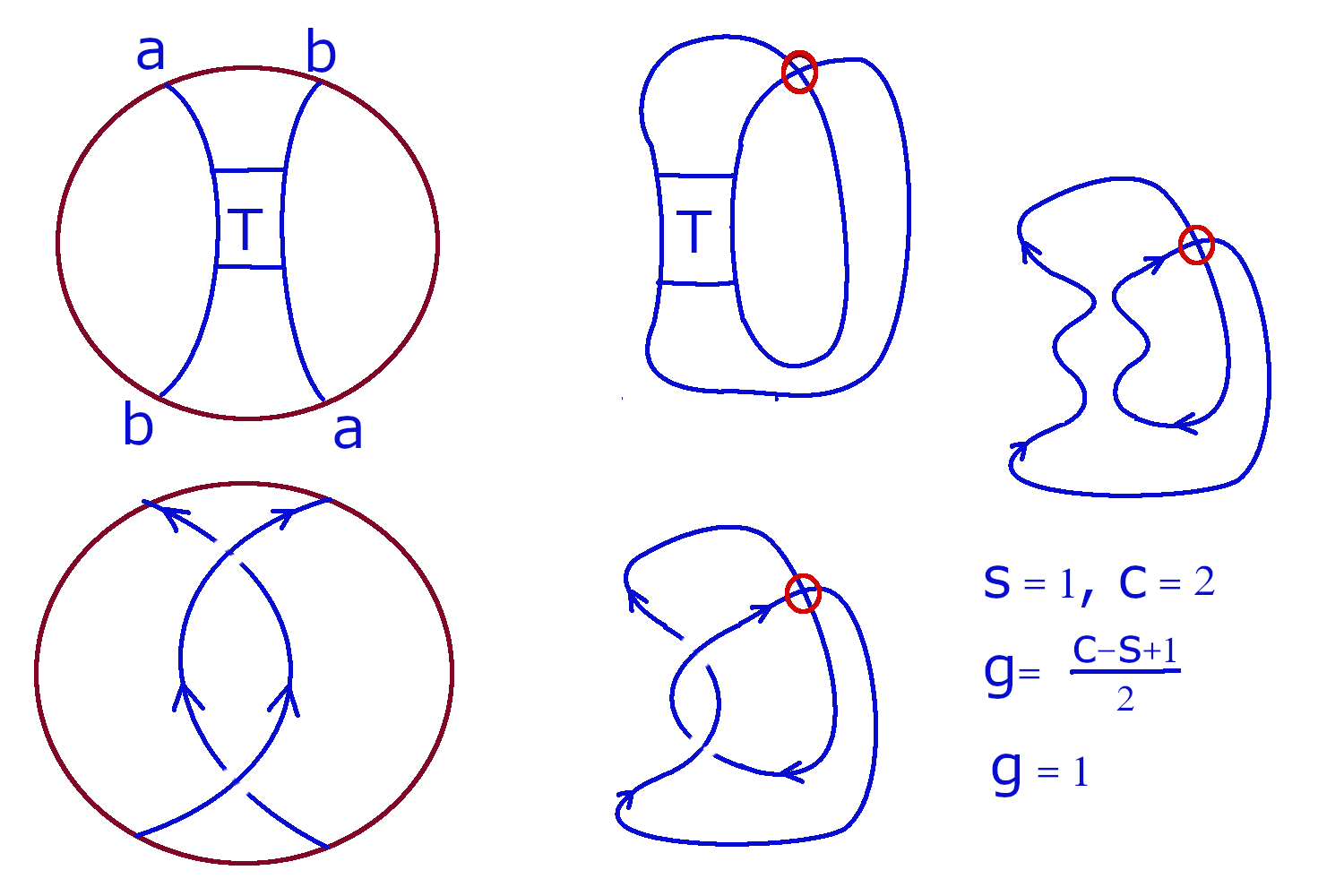}
     \caption{\bf Finding Projective Genus}
     \label{PG}
\end{center}
\end{figure}

\begin{figure}
\includegraphics[scale=1]{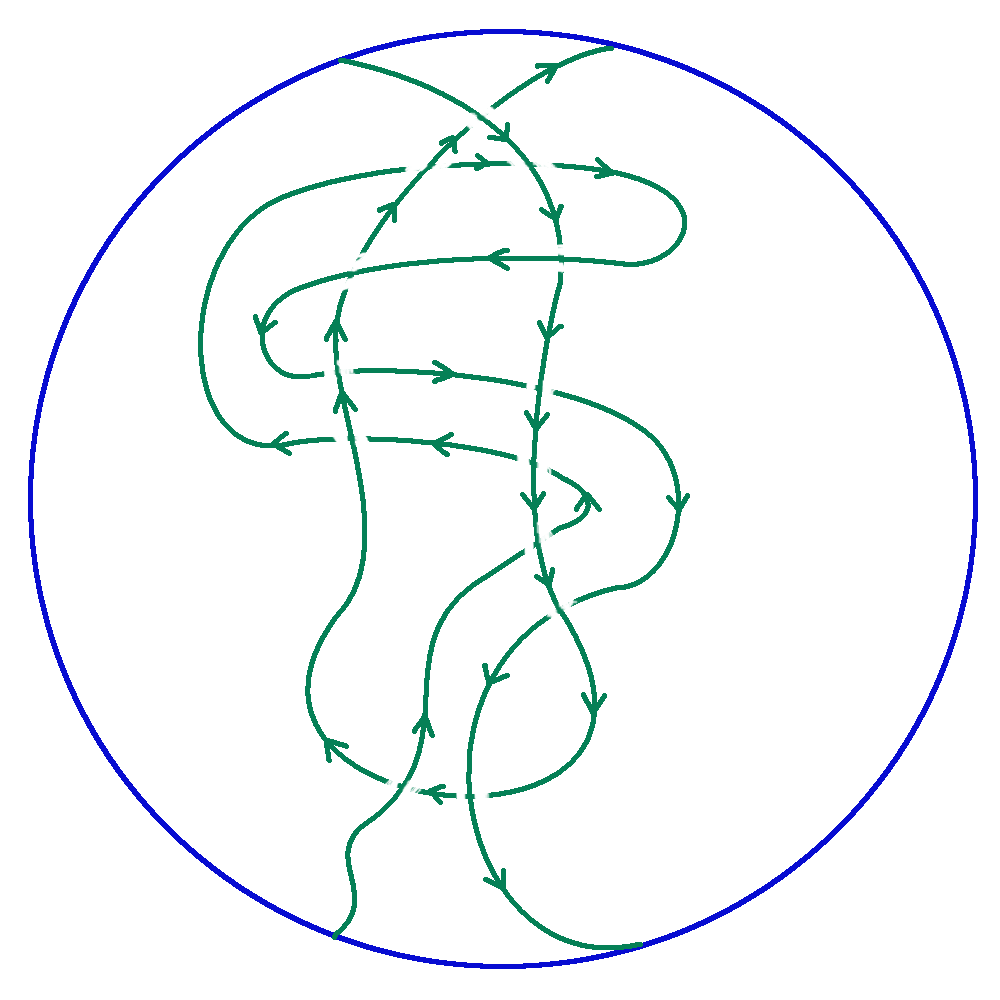}
\caption{A projective ribbon knot with $\frac{C+1-S}{2}=\frac{13+1-6}{2}=4$. }
\label{projribknot}
\end{figure}

\par It is to be noted that this formula works only for positive knots. The knot $R$ shown in Figure \ref{projribknot}, has the following Jones polynomial,
\begin{equation*}
f_R = 2+A^{-18}+A^{-16}-A^{-14}-2A^{-12}+2A^{-8}-2A^{-4}-A^{-2}+A^2,
\end{equation*}
which has monomials with degree not congruent to 4. Hence by Corollary \ref{drobcor} it is non-affine.  See the appendix for the calculation of bracket polynomial. $R$ is slice, since it is ribbon and hence it will have 4-ball genus 1. But $\frac{C+1-S}{2}=\frac{13+1-6}{2}=4$. \\

Ciprian Manolescu and Michael Willis  defined Khovanov Homology and a Rasmussen invariant \cite{Manolescu} by modifying a Khovanov homology theory for projective knots defined by Bostjan Gabrovsek \cite{gab}.
Their definitions are not the same as ours, but they obtain very similar results about positive links. A comparison of the two approaches is called for and we will discuss this comparison below and translate between the two points of view.

\subsection{A comparison with \cite{Manolescu}}\label{comparison}

\par For brevity of notation, in what follows, we will denote the cohomology theory of Dye, Kaestner and Kauffman as DKK theory and the cohomology theory of Manolescu and Willis as the MW theory.\\

\begin{figure}
\includegraphics[scale=1.5]{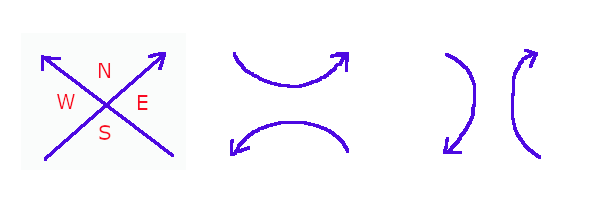}
\caption{Locally consistent orientations given by the cardinal directions.}
\label{carddirs}
\end{figure}

\par In the MW-theory, they introduce a new Frobenius algebra for the class-1 circles in a state, generated by $1$ and $ \overline{X}$, which is isomorphic to the Frobenius algebra above generated by $1$ and $X$. A word of caution is to be mentioned that the $\overline{X}$ in MW theory and the $\overline{X}$ above obtained from barring operation from DKK theory are entirely different. In MW theory, the introduction of this new algebra means that the final Khovanov cohomology has knowledge of the singular homology class represented by the link.\\

\par In the MW theory, one has to choose a global orientation on the link diagram and then  arbitrary local orientations at each of the crossings. Each of the state circles are  given a numbering and global orientation arbitrarily. The chosen local orientations at a crossing determine cardinal directions on the local quarter planes as shown in Figure \ref{carddirs}. The cardinal directions at every crossing determines a set of \say{consistent} orientations on the resolved arcs as in Figure \ref{carddirs}. There are three rules for deciding signs, while applying the differential. These are called  \textit{Permutation rule}, \textit{Nearby consistency rule} and the \textit{Far orientations rule}.\\

\par Permutation rule is exactly same as the rule in the DKK theory, described above, using Figure \ref{permute}. Nearby consistency rule says that if the consistent orientation on an arc at a newly produced site (see Figure \ref{carddirs}), is mismatched with the global orientation, then this determines a sign on the $X$ component of the algebra element produced there. Comparing Figure \ref{sosinkres} and Figure \ref{carddirs}, it is clear the local orientations on the state circles are exactly the same in both the theories. Hence a state in the MW theory may be used to construct a state in the DKK theory by applying the map $\pi$ to the state and change each of the cardinal orientations to source-sink orientations. Similarly a state in DKK theory determines a state in MW theory, as shown in Figure \ref{eqmap}. \\

\begin{figure}
\centering
\includegraphics[scale=1.8]{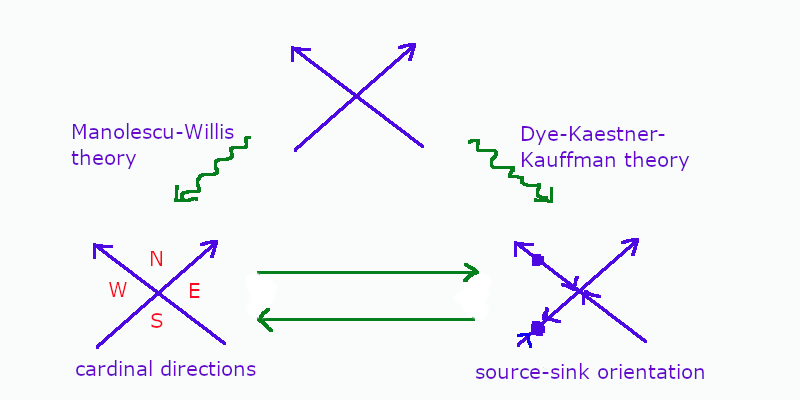}
\caption{Equivalence of our theory with MW theory}
\label{eqmap}
\end{figure}

\par We can use the DKK cutpoint assignment to choose global orientations for each loop in a state. We do this by taking the local orientation at the base point of a circle in a DKK state and then using its direction co-determine a global orientation for the corresponding loop in the MW state. When the differential is applied, the local orientation at the arcs at the site of a loop will be either the same or different from the global orientations according to the parity of number of cutpoints that one must traverse while travelling towards the base point. Now notice that we have chosen to orient the MW state (each loop given an orientation) using the local orientations at the basepoint from the DKK state. When the local orientation at a site mismatches with the global orientation, this corresponds to a sign change for the $X$'s on the MW state by nearby consistency rule. This can occur only when there are odd number of cut points between the base point and the site. Thus the DKK rule involving parity of number of cut points becomes the exact same sign change as in the MW theory. \\

\par The far orientations rule is as follows. After applying the differential, the result is multiplied with a sign corresponding to the parity of the number of circles, which do not pass through the neighbourhood of the bifurcating crossing and are carrying an $X$ and have distinct orientations in the diagrams before and after the bifurcation. Note that the orientation of a far circle flips when the base points chosen before and after are separated by an odd number of cutpoints. Which means, in the new state, the algebra element on that circle has to be barred in the DKK rule. Hence this rule is already implied by the cutpoint rules. \\

\par Thus, in this special set of DKK states determined by MW states, the signs sprinkled by either of the two theories are coherent. So we have an equivalence of the two cohomology theories. Given a projective link diagram $L$, we have natural chain map,

\begin{align*}
\alpha: C_{MW}(L) &\rightarrow C_{DKK}(L)\\
 [s] &\mapsto [\pi(s)]
\end{align*}

\noindent where $C_{DKK}(L)$ represents the chain complex associated as above and $C_{MW}(L)$ is \say{unlabeled} chain complex associated as in \cite{Manolescu}, with $\overline{X}$ replaced by $X$. \say{Unlabeled} means we do not distinguish the class 1 unknot from the class 0 unknot. Our result is that the unlabeled cohomologies are isomorphic.\\

\begin{thm}
The chain map $\alpha$ induces isomorphisms on the cohomology groups.
\end{thm}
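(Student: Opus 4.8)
The plan is to establish the stronger statement that $\alpha$ is an isomorphism of cochain complexes; the assertion about cohomology is then immediate, since an isomorphism of complexes induces isomorphisms on all cohomology groups. Concretely, I would prove two things: that $\alpha$ is a bijection on the standard generators preserving the homological grading, and that it intertwines the two differentials, i.e. $d_{DKK}\circ\alpha=\alpha\circ d_{MW}$. The second point is the substance of the argument and is exactly what the sign comparison preceding the theorem supplies.

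First I would check the bijection on the chain level. The cubes of resolutions underlying $C_{MW}(L)$ and $C_{DKK}(L)$ share the same index set, as their vertices are indexed by the $A/B$ smoothings of the classical crossings of $L$; the map $\pi$ carries each MW state $s$ to the virtual state $\pi(s)$, sending state circles bijectively to state circles. Because we work with the \emph{unlabeled} MW complex, in which $\overline{X}$ is replaced by $X$, every state circle --- whether class-$0$ or class-$1$ --- carries the same two-dimensional Frobenius algebra with basis $\{1,X\}$ in both theories. Thus $\alpha$ identifies the tensor factors circle by circle, is a bijection on the standard generators, and respects the homological grading, which is read off from the number of $B$-smoothings in each state.

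The heart of the argument is the chain-map property. The Frobenius maps $m$, $\Delta$, and the zero map $\eta$ agree under the identification of the two algebras, and $\pi$ preserves the bifurcation type (of type $2$-$1$, $1$-$2$, or $1$-$1$) of every edge of the cube, so only the signs remain to be reconciled. Here I would fix compatible auxiliary data, using the DKK cut-point assignment to co-determine the global orientation of each loop of the corresponding MW state as in Figure \ref{eqmap}. Under these simultaneous choices the three MW sign rules match the DKK conventions term by term: the Permutation rule is literally the DKK permutation rule of Figure \ref{permute}; the Nearby consistency rule yields a sign on an $X$ exactly when an odd number of cut points separates the base point from the site, which is the DKK barring rule; and the Far orientations rule records exactly the barring of a non-participating circle whose base points before and after the bifurcation are separated by an odd number of cut points. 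Hence the two differentials agree under $\alpha$.

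Combining the two steps, $\alpha$ is a bijective chain map, hence an isomorphism of cochain complexes, and therefore induces isomorphisms on cohomology. I expect the main obstacle to lie in the bookkeeping of the second step: one must verify that the arbitrary orientations, base points, and circle numberings of the two constructions can be chosen \emph{simultaneously} compatible, and that all three sign rules then hold at once without interference --- in particular that the far-circle contribution is not already accounted for by the cut-point transport of the participating circles. Since this coherence is precisely what was argued in the discussion leading up to the statement, the proof reduces to assembling these observations into the conclusion that $\alpha$ is an isomorphism of complexes.
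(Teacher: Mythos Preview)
Your proposal is correct and follows exactly the paper's approach: the paper's proof simply states that, by the preceding discussion, $\alpha$ is a bijection on the chain groups which commutes with the differentials, and therefore induces isomorphisms on cohomology. You have done nothing more than spell out that same argument in greater detail.
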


\begin{proof}
From the discussion above, it follows that the map $\alpha$ is  a bijection on the chain groups, which commutes with the differentials. Thus it induces isomorphisms on the cohomology groups. 
\end{proof}

\begin{rem}
Our cohomology theory can be modified by adding markings for class-1 state circles so that we get cohomology groups that are identical to those in the MW theory.
\end{rem}

\par In our method, we apply the map $\pi$ to a projective knot diagram, drawn on a disk to obtain a virtual diagram, and then construct the cube of resolutions for it. Instead of this, we can  first construct the cube of resolutions of a projective diagram on the disk where the states will consist of diagrams of disjoint unknots and then apply the map $\pi$ to each of the vertices and obtain the cube of resolutions of the corresponding virtual knot. Note that out of in every state, most of the state circles are class-0 unknots. There can be at most one class-1 unknot in a state. This will be represented by an arc which connects some pair of diametrically opposite points. Before applying the map $\pi$, we may mark this state circle, by putting a diamond on the arc. Now after applying the map $\pi$, in the cube of resolutions of the virtual knot, we have a unique state circle, in those states with the marking. While applying the TQFT, these circles are mapped to a different Frobenius algebra $V^*$, generated by $1^*, x^*$. The structure maps for this algebra are as follows.

\begin{align*}
m: V\otimes V&\to V;\\
m(1^*\otimes1^*)&=1^*;\\
m(1^*\otimes x^*)&= m(x^*\otimes1^*)=x^*;\\
m(x^*\otimes x^*)&=  0
\end{align*}
\begin{align*}
\Delta: V\to V\otimes V;\\
\Delta(1^*)&= 1^*\otimes x^* + x^*\otimes 1^*;\\
\Delta(x^*)&= x^*\otimes x^*
\end{align*}

\begin{align*}
\nu: \mathbb{Q}\to V;\\
&\nu(1^*)= 1^*
\end{align*}

\begin{align*}
\epsilon:V \to \mathbb{Q};\\
\epsilon(1^*)&=1^*;\\
\epsilon(x^*)&=0 
\end{align*}

\begin{figure}
\includegraphics[scale=0.5]{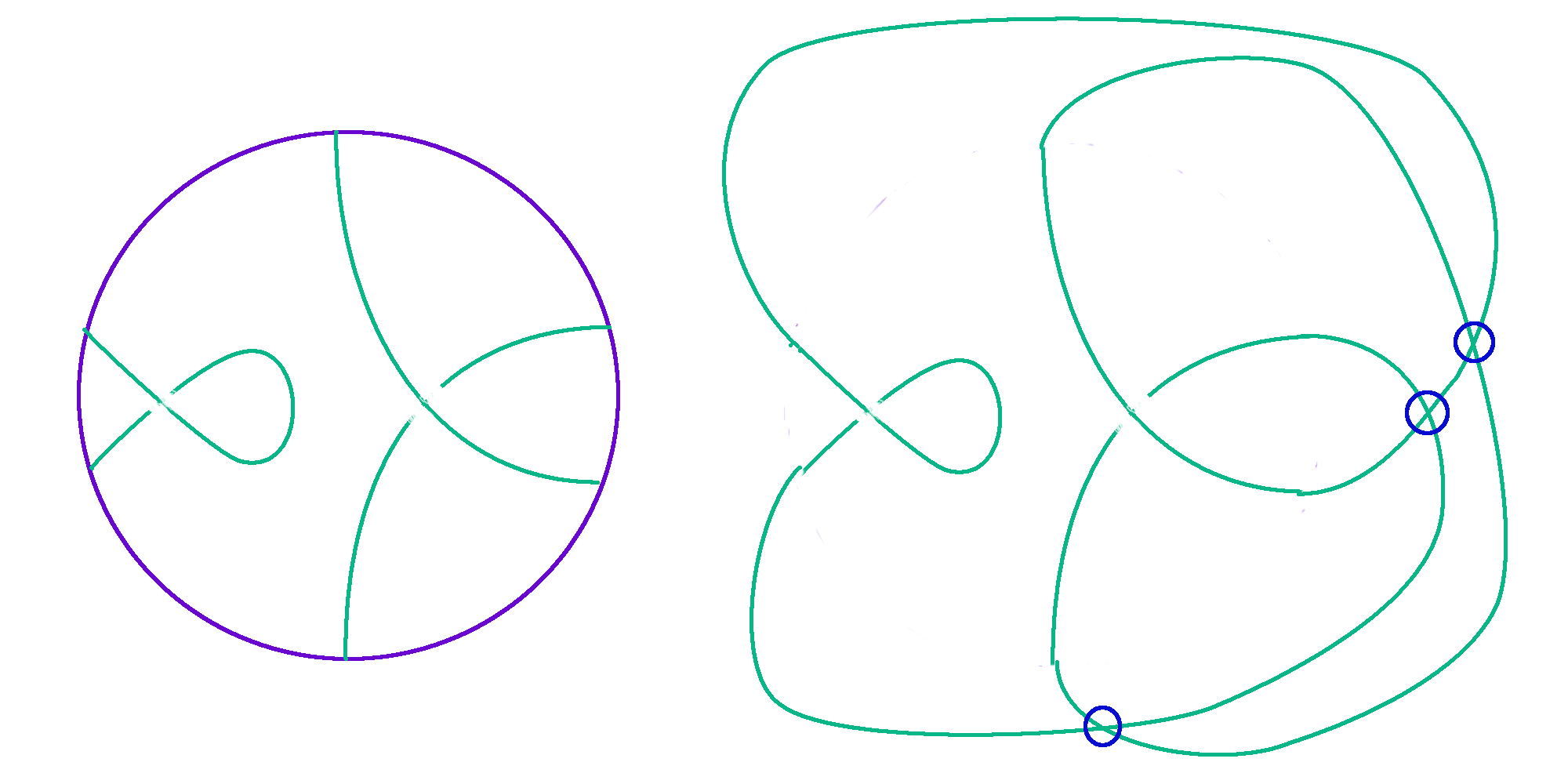}
\caption{A projective knot and the corresponding virtual knot.}
\label{knotcube}
\end{figure}

\begin{figure}
\includegraphics[scale=0.3]{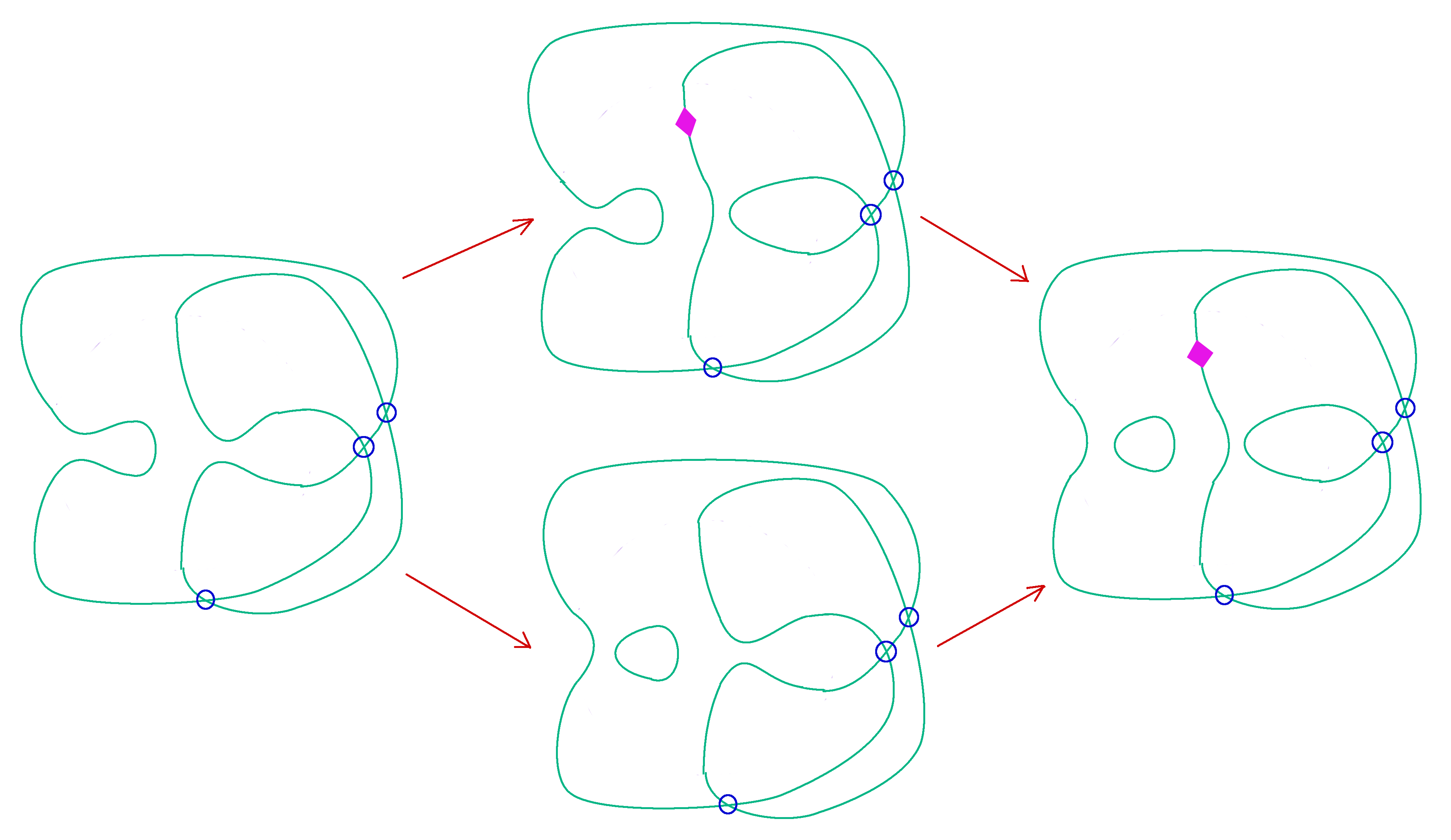}
\caption{The cube of resolutions for a projective virtual knot. The class-1 state circles are marked with a diamond.}
\label{cubeclass1}
\end{figure}

\section{What this method cannot see!}\label{blindness}

\begin{figure}
\centering
\includegraphics[scale=0.8]{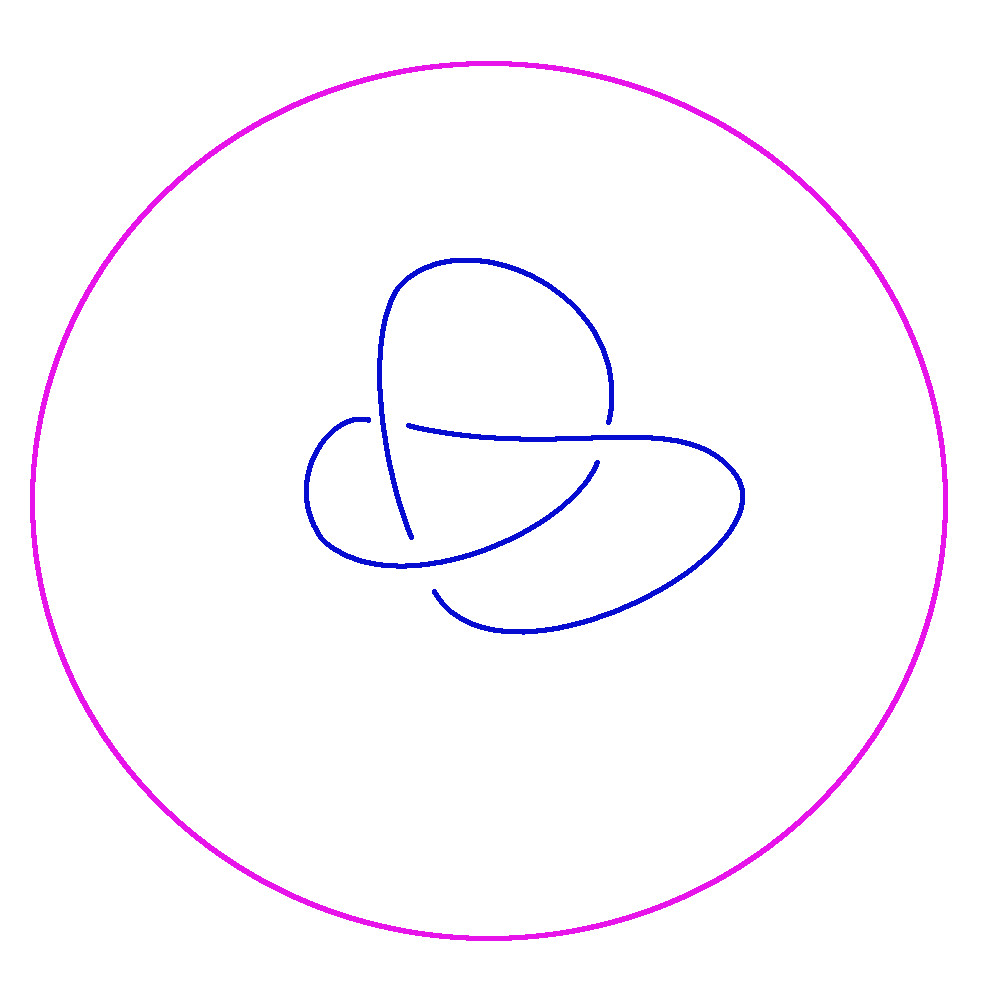}
\caption{Affine trefoil knot}
\label{AK}
\end{figure}

\begin{figure}
\centering
\includegraphics[scale=0.8]{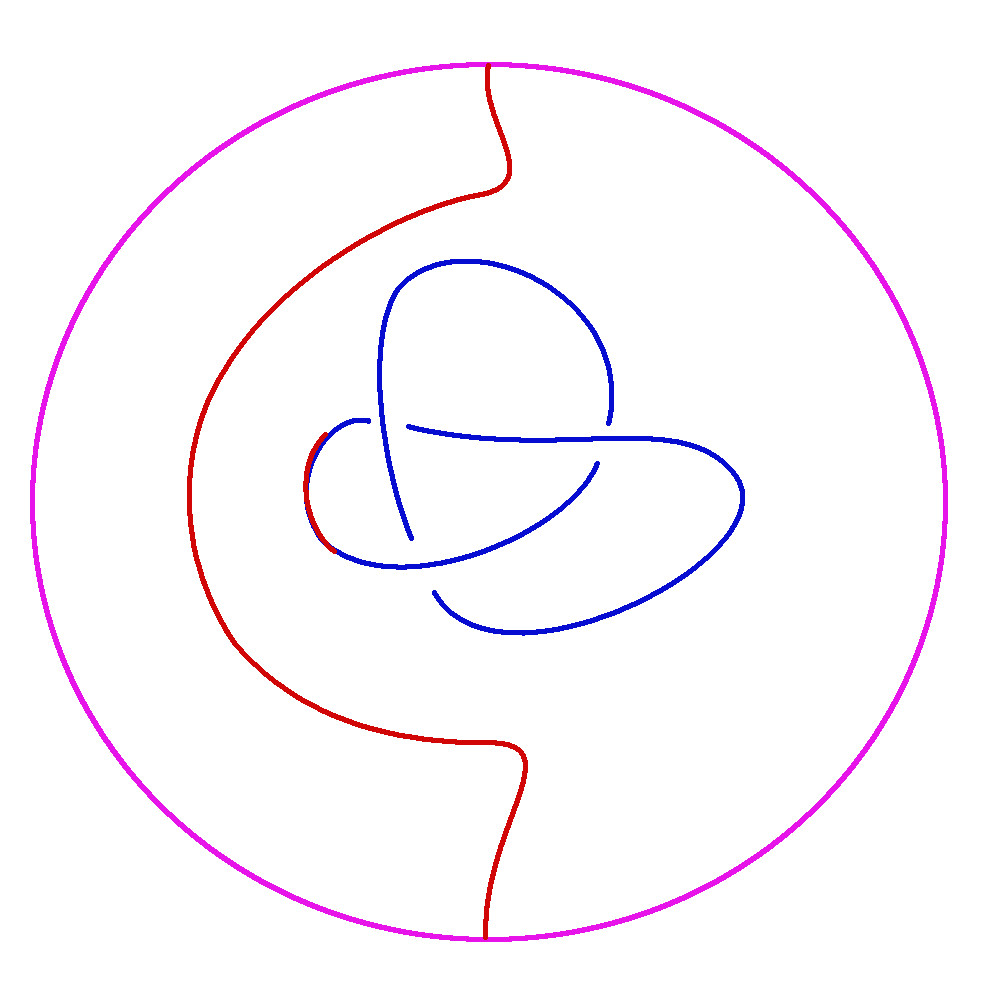}
\caption{Affine trefoil knot and a non-affine un-knot }
\label{AKN}
\end{figure}

\begin{figure}
\centering
\includegraphics[scale=0.8]{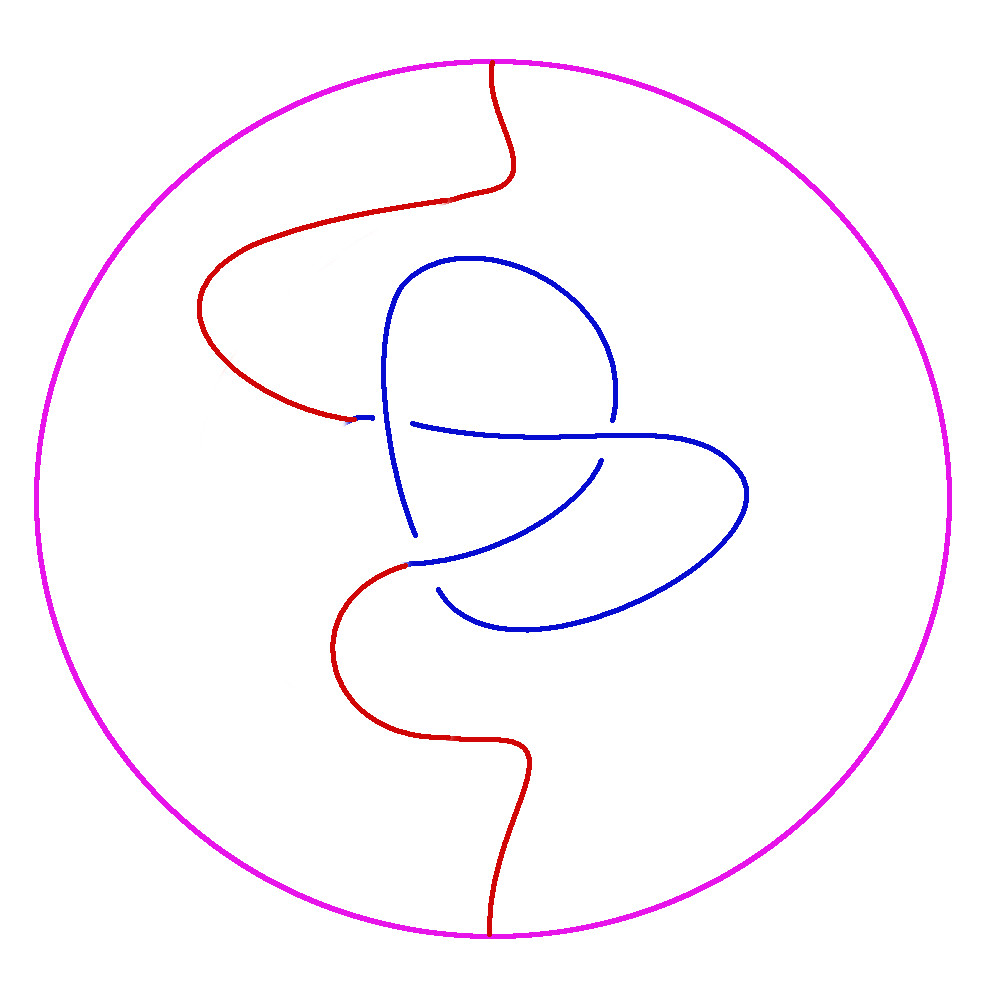}
\caption{Non-affine trefoil knot}
\label{NAK}
\end{figure}

   The association of a virtual knot to a projective knot in this way indeed is not a complete solution to the problem distinguishing projective knots. For example here we discuss an infinite family of pairs of knots which have identical associated virtual knots. \\
\par   Consider the affine trefoil knot as shown in Figure~\ref{AK}. Notice that the virtual knot associated to this is the classical trefoil knot. Choose a non-affine unknot passing nearby this knot as in Figure~\ref{AKN}. We may remove a small arc from the affine knot and the non-affine unknot and join the remaining parts of both of the knots. We will get a knot as shown in Figure~\ref{NAK}. This is a class-1 knot and hence distinct from the knot we started with. This process is called \say{projectivization} in \cite{MN}. Notice that the virtual knot associated to both the projectivized trefoil knot and the affine trefoil knot, by our mapping $\pi$ is the classical trefoil knot. \\

\par An interesting example is the case of the Figure-8 knot. If we projectivize the affine Figure-8 knot on two different regions, we will get two diagrams as shown in Figure~\ref{TU}. It is not clear to the authors whether these two diagrams represent the same knot. Notice that the virtual knot associated to both of these knots, is exactly the classical Figure-8 knot. Hence our method of associating a virtual knot does not help in the problem of distinguishing these knots as projective knots. Thus none of the virtual knot invariants or the Jones polynomial of Drobotukhina \cite{julia} will help us here. 
\begin{figure}
\centering
\includegraphics[scale=0.8]{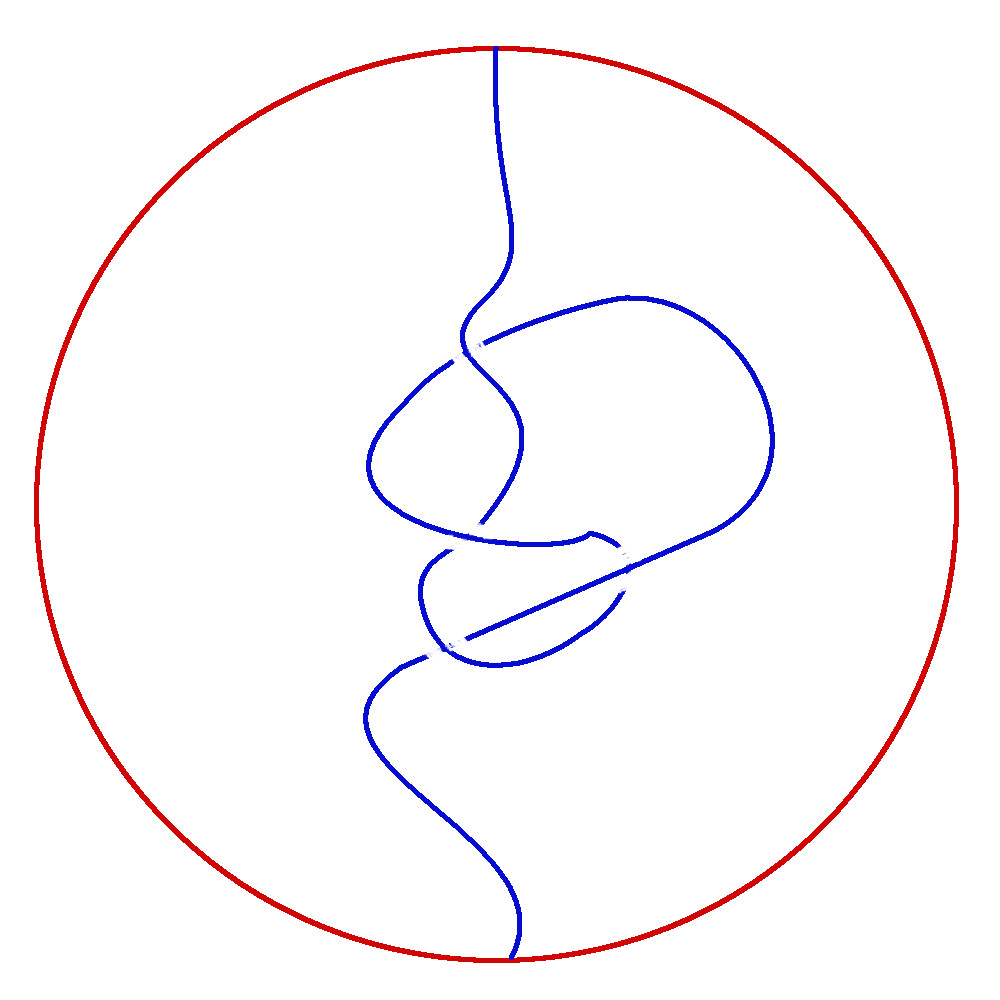}
\includegraphics[scale=0.8]{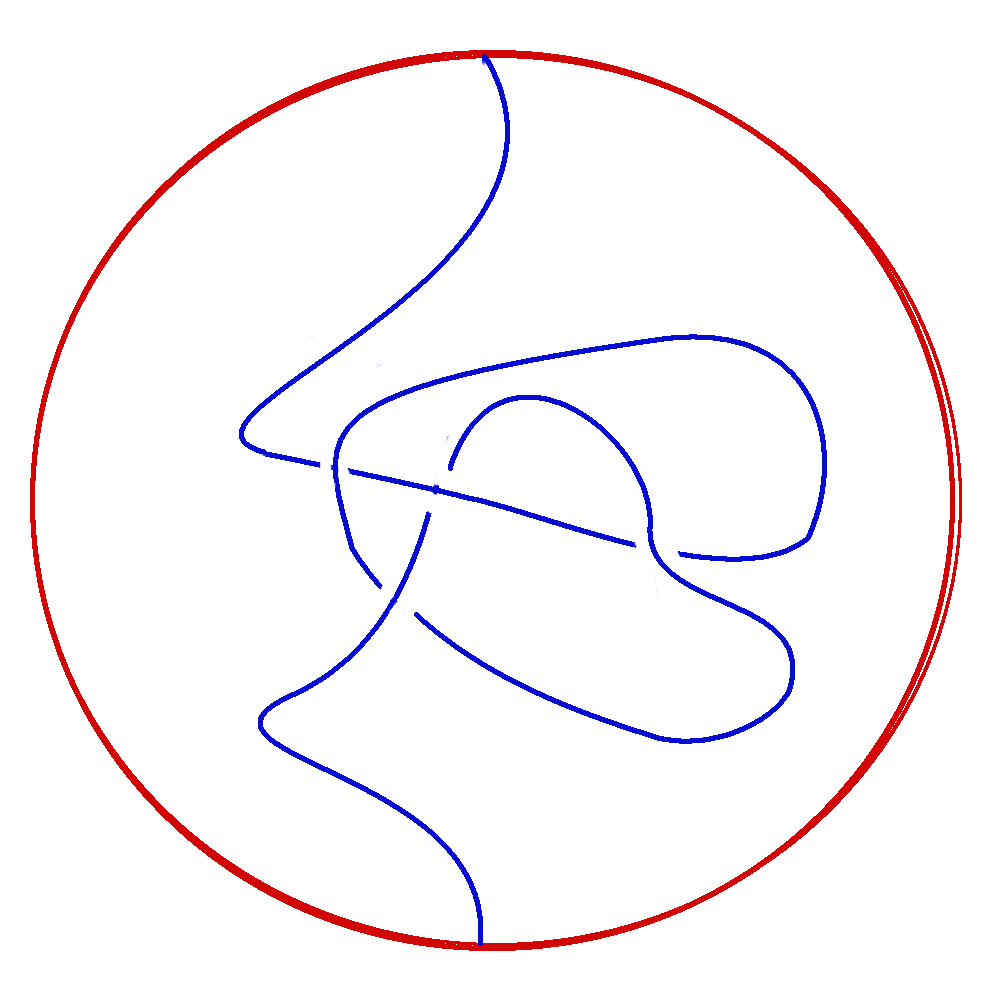}
\caption{Diagrams of two projectivizations of the Figure-8 knot}
\label{TU}
\end{figure}

\newpage

\section{Appendix: Bracket polynomial of the knot in Figure \ref{projribknot}}\label{appbracket}

\begin{figure}
\includegraphics{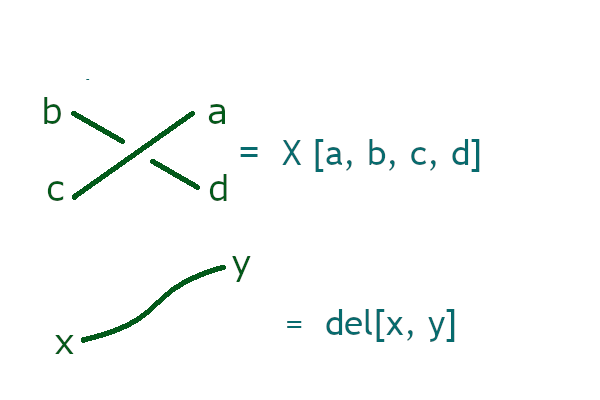}
\caption{Conversion formula to the code.}
\label{rules}
\end{figure}

\begin{figure}
\includegraphics[scale=1]{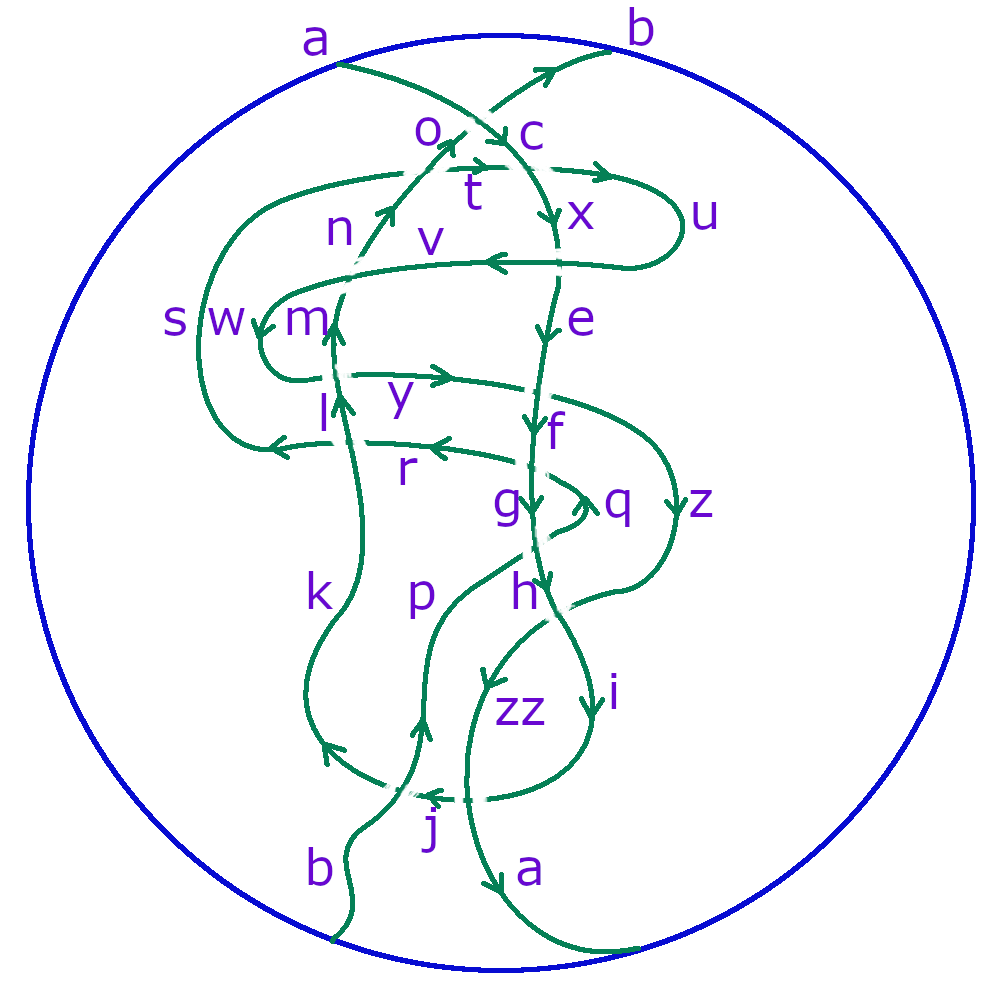}
\caption{Indexing the arcs of the ribbon knot in Figure \ref{projribknot} to calculate bracket polynomial.}
\label{knotindex}
\end{figure}

We may index the arcs of the diagram as in Figure  \ref{knotindex}. The $Mathematica$ code for computing the bracket is given below. The conversion rules are given in Figure \ref{rules}.

\includegraphics[scale=0.7]{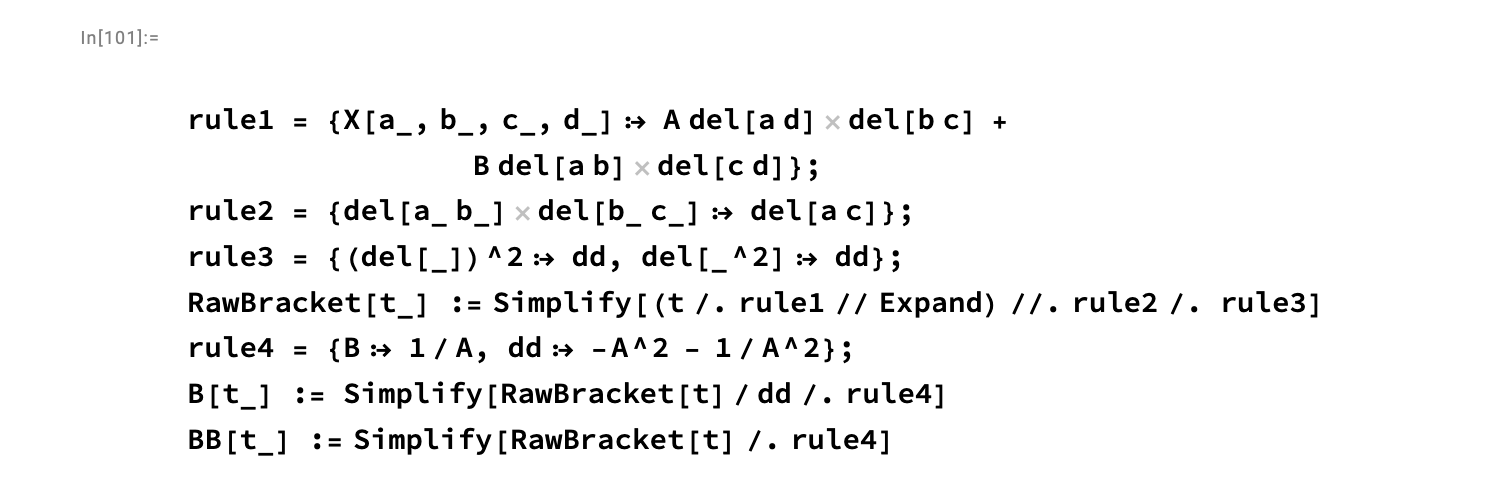}\\

\includegraphics[scale=0.7]{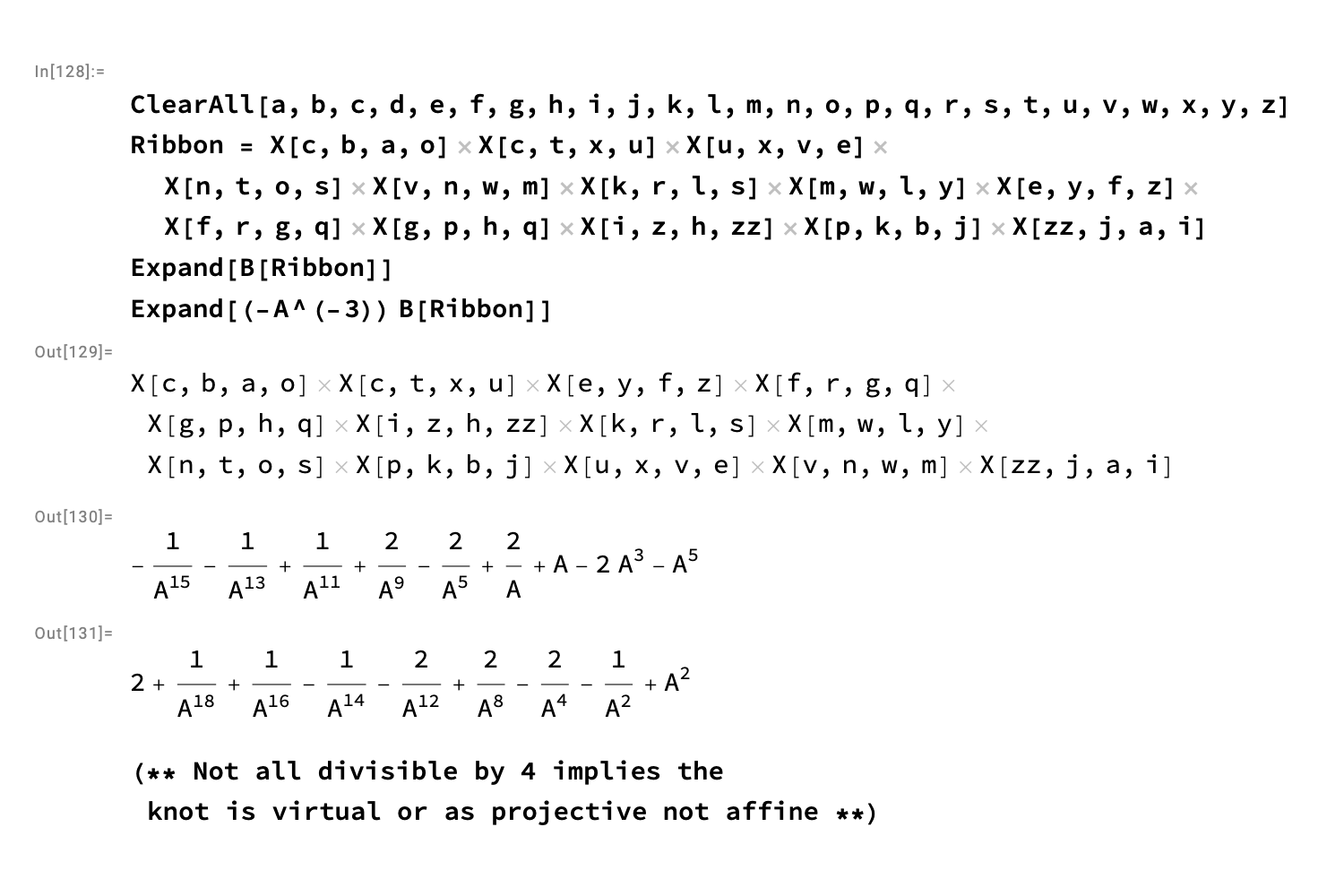}


\vspace{1in}
\begin{center}
Louis H. Kauffman\\ Department of Mathematics, Statistics \\ and Computer Science (m/c
249)    \\ 851 South Morgan Street   \\ University of Illinois at Chicago\\ Chicago, Illinois 60607-7045\\ and \\
International Institute for Sustainability with Knotted Chiral Meta Matter (WPI-SKCM2),\\
Hiroshima University, 1-3-1 Kagamiyama, Higashi-Hiroshima, Hiroshima 739-8526, Japan\\ $<$kauffman@uic.edu$>,$\\
\vspace{0.3in}
Rama Mishra
\\ Indian Institute of Science Education and Research, Pune, India\\
$<$r.mishra@iiserpune.ac.in$>$ \\ \vspace{0.15in} and \\
\vspace{0.15in}
Visakh Narayanan
\\ Indian Institute of Science Education and Research, Mohali, India\\
$<$vishakhme@gmail.com$>$

\end{center}

\end{document}